\newcommand{\NL}{$\mathcal{IR}$\L}
\newtheorem{theorem}{Theorem}[section]
\newtheorem{lemma}[theorem]{Lemma}
\newtheorem{corollary}[theorem]{Corollary}
\newtheorem{proposition}[theorem]{Proposition}
\newtheorem{notation}{Notation}
\theoremstyle{definition}
\newtheorem{definition}[theorem]{Definition}
\newtheorem{remark}[theorem]{Remark}
\tikzset{node distance=2.5cm, auto}
\newcommand{\quot}[2]{{\raisebox{.2em}{$#1$}\left/\raisebox{-.2em}{$#2$}\right.}}
\begin{document}

\title{Infinitary logic and basically disconnected compact Hausdorff spaces}
\author{Antonio Di Nola,
Serafina Lapenta\footnote{Corresponding author}\\ 
{\small Department of Mathematics, University of Salerno,}\\ 
{\small  Via Giovanni Paolo II, 132 Fisciano (SA), Italy}\\
 {\small adinola@unisa.it, slapenta@unisa.it}
 \and
Ioana Leu\c stean \\
{\small Department of Computer Science,} \\
{\small Faculty of Mathematics and Computer Science, University of Bucharest,}\\
{\small Academiei nr.14, sector 1, C.P. 010014,  Bucharest, Romania}\\   
{\small ioana@fmi.unibuc.ro}
}
\date{}

\maketitle

\begin{abstract}
We extend \L ukasiewicz logic obtaining the infinitary logic \NL \ whose models are algebras $C(X,[0,1])$, where $X$ is a basically disconnected compact Hausdorff space. Equivalently, our models are unit intervals in Dedekind $\sigma$-complete Riesz spaces with strong unit. The Lindenbaum-Tarski algebra of  \NL \  is, up to isomorphism, an algebra of $[0,1]$-valued Borel functions. Finally, our system enjoys standard completeness with respect to the real interval $[0,1]$.

\noindent {\em Keywords: \L ukasiewicz logic, infinitary logic, Riesz spaces, Riesz MV-algebra, compact Hausdorff space.}

\end{abstract}

\section{Introduction}

Riesz Spaces have had a predominant position in the development of functional analysis over ordered structures. The monographs \cite{RS, RSZan} are a good reference to grasp the widespread of applications and results related to Riesz spaces (vector lattices) and Banach lattices.

Not very known is the interplay between the theory of  Riesz spaces and \L ukasiewicz logic: given any positive element $u$ of a Riesz space $V$, the interval $[0,u]=\{x\in V\mid 0\le x\le u\}$ can be endowed with a stucture of {\em Riesz MV-algebra} \cite{LeuRMV}. These structures are expansions of MV-algebras, the standard semantics of the 
$\infty$-valued \L ukasiewicz logic \cite{CDM, MunBook}.  The well known categorical equivalence between MV-algebras and lattice-ordered groups with strong unit \cite{Mun1} led to a categorical equivalence between Riesz MV-algebras and Riesz spaces with strong unit. Consequently, one can develop a logical system that extends \L ukasiewicz logic and has Riesz MV-algebras as models \cite{LeuRMV}.
 
The strong connection between functional analysis and Riesz spaces should be reflected by their underlying logical systems and in the present paper  we test the expressive power of the ``logic of Riesz Spaces"  in order to provide a logical system that is closer to what we may think of as the ``logic of (some) Hausdorff spaces". Remarkably, by adding a countable disjunction to the logic of Riesz MV-algebras we were able to obtain the desired bridge between logic, topology  and functional analysis.

Our construction rests on Kakutani's duality between abstract M-spaces and compact Hausdorff spaces \cite{kakutani}. Indeed, from a categorical point of view, the \emph{norm-complete} Riesz MV-algebras\footnote{See \Cref{sec:pre} for the definition of a norm-complete Riesz MV-algebra.} are equivalent to M-spaces, and henceforth dual to compact Hausdorff spaces \cite{LeuRMV}.  Our aim is to express the property of being ``norm-complete" in a logical setting.  Thus, in \cite{DiNLL-RMV} we introduced the limit of a sequence of formulas, a syntactic notion whose semantic counterpart is the uniform limit of the corresponding sequence of term functions.  This notion is slightly stronger than the usual  notion of order convergence and this remark has been the starting point of the present development.

In Section \ref{sec:logic} we define the system \NL , that stands for \textit{Infinitary Riesz Logic},  which expands the logic of Riesz spaces with denumerable disjunctions and conjunctions. 
In this way we define a logical system whose models are isomorphic to unit intervals of {\em $\sigma$-complete M-spaces} and, consequently, strongly related to  \textit{basically disconnected compact Hausdorff spaces}.  The Lindenbaum-Tarski algebra in $n$ variables is concretely characterized as the algebra of all Borel functions $f:[0,1]^n\to [0,1]$. Moreover, our system enjoys standard completeness: the real interval $[0,1]$ endowed with a structure of $\sigma$-complete Riesz MV-algebra is a standard model.

The present approach via infinitary logic is built upon the work of C.R. Karp \--- of which the monograph \cite{Karp} is a complete treatise \--- where we replace the classical axioms of Boolean logic with the axioms of Riesz \L ukasiewicz logic. 

After the needed preliminaries, we define the logical system \NL \ in Section \ref{sec:logic}, where we also prove a general completeness theorem. 
The link between the models of \NL \ and Kakutani's duality is provided in \Cref{sec:KHSpace}. In the last section we prove the Loomis-Sikorski theorem for Riesz MV-algebras, based on the well-known similar result for MV-algebras \cite{Dvu-LS,Mun}. Consequently, we prove the standard completeness of \NL \ and 
we give a concrete characterization of the free $n$-generated $\sigma$-complete Riesz MV-algebra in terms of $[0,1]$-valued functions defined over $[0,1]^n$.
  
\section{Preliminaries}\label{sec:pre}
In the first part of this section we recall the interaction between \L ukasiewicz logic and the theory of Riesz spaces, while the second part is a short presentation of the infinitary classical logic. 

\subsection{\L ukasiewicz logic and Riesz MV-algebras}\label{sec:prelimRMV}
\noindent  An {\em MV-algebra} is a structure $(A,\oplus,^{*},0)$ of type $(2,1,0)$ such that, for any $x,y\in A$,
$$\begin{array}{ll}
 (A,\oplus,0) \mbox{ is an Abelian monoid}, & ({x^*})^* = x\\
 (x^* \oplus y)^* \oplus y = (y^* \oplus x)^* \oplus x, & 0^* \oplus x = 0^*.
	\end{array}$$
They were first introduced as an algebraic counterpart of the propositional \L ukasiewicz infinite-valued logic, but they soon gained a relevant place in the theory of lattice-ordered algebraic structures.

On an MV-algebra one can define further operations as follows: $1$ is $ 0^*$, the  \L ukasiewicz
implication is $x\to y=x^* \oplus y$ and the \L
ukasiewicz conjunction is $x\odot y =( x^*\oplus
y^*)^*$ for any $x$, $y\in A$. If $x\vee y = x\oplus (y\odot x^*)$ and $x\wedge y = (x^*\vee y^*)^*$ then $(A,\vee,\wedge, 1,0)$ is a bounded distributive lattice. 

It is straightforward that MV-algebras form a variety, in which the \textit{standard model}, that is the generator of the variety, is
$$ [0,1]_{MV}=([0,1], \oplus, ^*,0),$$
 where $[0,1]$ is the real unit interval,  $x^*=1-x$ and  $x\oplus y=\min(1,x+y)$ for any $x,y\in [0,1]$.  We urge the interested reader to consult  \cite{CDM,DinLeu}  for a basic introduction to MV-algebras and \L ukasiewicz logic, and \cite{MunBook} for advanced topics.

A fruitful research direction has arisen from the idea of endowing MV-algebras with a product operation. In particular, when we consider a scalar multiplication, we obtain the notion of a \textit{Riesz MV-algebra}.

A {\em Riesz MV-algebra} \cite{LeuRMV} is a structure 
$$(R, \oplus, ^*, 0, \{r\mid r \in [0,1]\}),$$ where $(R, \oplus, ^*, 0)$ is an MV-algebra and $\{r\mid r \in [0,1]\}$ is a family of unary operations such that  the following  properties hold for any $x,y \in A$ and $r,q\in [0,1]$:
$$\begin{array}{ll}
r (x\odot y^{*})=(r  x)\odot(r y)^{*},& 
(r\odot q^{*})\cdot x=(r x)\odot(qx)^{*},\\
r (q  x)=(rq) x,&
1 x=x. 
\end{array}$$

As for MV-algebras, the unit interval provides the standard model of a Riesz MV-algebra, indeed the variety of Riesz MV-algebras is generated by 
 $$[0,1]_{\mathbb R}=([0,1],\oplus, ^*, \{\alpha\mid \alpha\in [0,1]\}, 0),$$
 where $([0,1],\oplus, ^*,0)$ is the standard MV-algebra and  $x\mapsto \alpha x$ is a unary operation for any $\alpha\in [0,1]$ that is interpreted in the product of real numbers.

Both MV-algebras and Riesz MV-algebras can be approached from the point of view of category theory: $\mathbf{MV}$ and $\mathbf{RMV}$ denote the category whose objects are MV-algebras and Riesz MV-algebras respectively, and whose arrows are homomorphisms of MV-algebras and Riesz MV-algebras respectively. \textit{Semisimple} MV-algebras, which play a central role in the connections with logic, are subdirect products of  MV-subalgebras of $[0,1]$ or, equivalently, they are sub-MV-algebras of $C(X)$, the continuous functions from a compact Hausdorff space $X$ to $[0,1]$. A  Riesz MV-algebra is semisimple if its MV-algebra reduct is semisimple. 

For what follows, a particularly important full subcategory of Riesz MV-algebras is the one of \textit{norm-complete} Riesz MV-algebras \cite{LeuRMV}. In any  Riesz MV-algebra $R$ it is possible to define the {\em unit seminorm} $\|\cdot\|_u:R\to [0,1]$  by $\|x\|_u=\min\{r\in [0,1]\mid  x\leq r1_R\}$ for any $x\in R$. Such a seminorm induces a pseudometric $\rho_{\|\cdot\|_u}$; when $R$ is semisimple, $\|\cdot\|_u$ is a norm and $\rho_{\|\cdot\|_u}$ is a metric. We say that a Riesz MV-algebra $R$  is {\em norm-complete} if $(R,\rho_{\|\cdot\|_u})$ is a complete metric space. Any norm-complete Riesz MV-algebra $R$ is isometrically isomorphic with $C(X)=\{f:X\to [0,1]\mid f \mbox{ continuous}\}$, where $X$ coincides with $Max(R)$ \--- the space of all maximal ideals of $R$ \--- and $\|f\|_u= \sup\{f(x)\mid x\in X\}$. 

If $RMV_n$ denotes the free $n$-generated Riesz MV-algebra, in \cite[Corollary 7]{LeuRMV} it is proved that $RMV_n$ is a Riesz MV-algebra of appropriate\footnote{$RMV_n$ is isomorphic with the algebra of piecewise linear functions with \emph{real} coefficients, defined over $[0,1]^n$. For further details and a proper definition see \cite{LeuRMV}.} $[0,1]$-valued functions defined on $[0,1]^n$.  If we denote by $RL$ the Lindenbaum-Tarski algebra of the logic of Riesz MV-algebras $\mathcal{R}\L$  and by $RL_n$ the Lindenbaum-Tarski algebra in $n$ propositional variables, we can prove that $RL_n$ is isomorphic to $RMV_n$. In \cite[Theorem 2.15]{DiNLL-RMV} it is proved that the norm-completion of $RL_n$ coincides with $C([0,1]^n)$, where both algebras are endowed with the unit norm defined above.
 
Finally, we recall that the categories $\mathbf{MV}$ and $\mathbf{RMV}$ are equivalent to the ones of lattice-ordered groups with strong unit ($\ell u$-groups) and Riesz spaces (vector lattices) with strong unit respectively. The functors that give the equivalence, denoted by $\Gamma$ and $\Gamma_\mathbb{R}$, are defined similarly to each other: in the case of MV-algebras, for any $\ell u $-group $(G,u)$, $\Gamma(G,u)=\{ x\in G \mid 0\le x\le u\}=[0,u]_G$  and for any morphism $f\colon (G,u)\to (H,v)$, $\Gamma (f)=f\mid_{[0,u]_G}$. The following result will be useful subsequently.
\begin{proposition}\label{Gamma-DedCompl}
$\Gamma$ and $\Gamma_\mathbb{R}$ can be restricted and co-restricted to the subcategories that have Dedekind $\sigma$-complete objects and $\bigvee$-preserving morphisms.
\end{proposition}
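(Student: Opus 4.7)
The plan is to verify $\Gamma$'s restriction and co-restriction on both objects and morphisms using one recurring device: for a strong unit $u$ in an $\ell u$-group $G$ and integer $k\ge 1$, every $g\in[0,ku]$ decomposes as $g=\sum_{i=1}^k g^{(i)}$ with $g^{(i)}=((g-(i-1)u)\wedge u)\vee 0\in[0,u]$; the assignment $g\mapsto g^{(i)}$ is monotone, and any $\ell u$-group homomorphism sending $u$ to $v$ commutes with it.

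For objects, the forward direction is immediate: if $(G,u)$ is Dedekind $\sigma$-complete, any countable family in $\Gamma(G,u)=[0,u]$ is bounded by $u$ in $G$, so its supremum exists there and lies in $[0,u]$, giving the supremum in the MV-algebra. For the converse, let $A=\Gamma(G,u)$ be Dedekind $\sigma$-complete and let $(g_n)_n\subseteq G$ be bounded above by some $g$. Using the strong unit, I fix $N,M\in\mathbb{N}$ with $g\le Nu$ and $-g_0\le Mu$; translating by $Mu$ I may assume $(g_n)_n\subseteq[0,ku]$ with $k=N+M$. Each component $g_n^{(i)}$ lies in $A$, so $a^{(i)}:=\bigvee_n g_n^{(i)}$ exists by hypothesis, and I would show $\sum_{i=1}^k a^{(i)}=\bigvee_n g_n$ in $G$. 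The upper-bound direction follows from monotonicity of the decomposition; for minimality, any upper bound $h'$ may be replaced by $h'\wedge ku\in[0,ku]$ (still an upper bound, since every $g_n\le ku$), and then its components dominate the $a^{(i)}$ componentwise in $A$, yielding $h'\wedge ku\ge\sum a^{(i)}$.

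The morphism part follows the same blueprint. If $f\colon(G,u)\to(H,v)$ preserves countable suprema, then $\Gamma(f)$ does as well, since suprema in $[0,u]$ coincide with suprema in $G$. Conversely, let $\Gamma(f)$ be $\bigvee$-preserving and suppose $\bigvee_n g_n=g$ in $G$; translating and decomposing as above, and using that $f(g^{(i)})=f(g)^{(i)}$ because $f(u)=v$, one obtains
\[
\bigvee_n f(g_n)=\sum_{i=1}^k\bigvee_n f(g_n^{(i)})=\sum_{i=1}^k f(a^{(i)})=f(g),
\]
where the middle equality is the hypothesis on $\Gamma(f)$. The case of $\Gamma_\mathbb{R}$ is entirely analogous: each scalar multiplication $r\cdot(-)$ is an $\ell$-group endomorphism, hence commutes with the truncation components and preserves countable suprema, so the additional Riesz-MV axioms impose no extra work. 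The only mildly technical point is the componentwise comparison of upper bounds in the reverse direction, which rests on monotonicity of $g\mapsto g^{(i)}$ and is a routine $\ell$-group calculation.
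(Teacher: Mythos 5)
Your argument is correct in substance and rests, at bottom, on the same device as the paper's proof, but packaged more elementarily. The paper disposes of the object part by citing Goodearl's Theorem 16.9 and proves the morphism part by passing through the representation of $(G,u)$ as the group of good sequences over $[0,u]_G$ (the construction it cites from BGL), computing suprema and the action of $f$ componentwise in that sequence group. Your truncation components $g^{(i)}=((g-(i-1)u)\wedge u)\vee 0$ are exactly the (finitely supported) good sequence of an element $g\in[0,ku]$, so the decomposition is the same one in disguise; what you do differently is to stay entirely inside $G$, prove the object equivalence directly instead of citing it, and make explicit the step the paper leaves implicit, namely that bounded countable suprema are computed componentwise --- your $h'\wedge ku$ upper-bound trick does this cleanly, without even appealing to infinite distributivity. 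This buys a self-contained proof of both halves of the proposition (and the observation that the Riesz scalar structure adds nothing is the same in both treatments), at the price of redoing facts the paper outsources to the literature.

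One small repair is needed: translating by $Mu$ with $-g_0\le Mu$ does not place the whole sequence in $[0,ku]$, since the $g_n$ need not be bounded below by $-Mu$. First replace $g_n$ by $g_n\vee g_0$ --- this changes neither the existence nor the value of the supremum and bounds the sequence below by $g_0$ --- and then your translation and the rest of the argument go through as written, including the morphism direction, where the first equality in your displayed chain is the object-part computation applied in $H$ to the sequence $f(g_n)\in[0,kv]$.
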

\begin{proof}
Since the property of being Dedekind $\sigma$-complete only depends on the lattice reduct of the algebra, we will prove the result for MV-algebras and $\ell u$-groups, but the same applies to Riesz MV-algebras.

The fact that $(G,u)$ is Dedekind $\sigma$-complete if, and only if, $[0,u]_G$ is $\sigma$-complete is well known, see for example \cite[Theorem 16.9]{goodearl}. We need to prove that $\Gamma (f)=f^*$ is $\bigvee$-preserving if, and only if, $f$ is so. For the non trivial direction, we recall that $(G,u)$ is isomorphic with the group of \emph{good sequences} built upon $[0,u]_G$. For such a group, we will use the simplified construction from \cite{BGL}. We have to prove that $f(\bigvee_n a_n)=\bigvee_n f(a_n)$, for $a_n\in G$. By \cite{BGL}, for any $a_n$ there exists a good sequence $(b_m^n)_{m\in\mathbb{Z}}$ that corresponds to $a_n$. Thus,
\begin{align*}
f\left(\bigvee_n a_n\right)=&f\left(\bigvee_n (\dots, b_1^n, b_2^n, \dots b_m^n, \dots)\right)=\\
=&f\left(\left(\dots, \bigvee_n b_1^n, \bigvee_n b_2^n, \dots, \bigvee_n b_n^n, \dots\right)\right)=\\
=&\left(\dots, f^*\left(\bigvee_n b_1^n\right), f^*\left(\bigvee_n b_2^n\right), \dots, f^*\left(\bigvee_n b_n^n\right), \dots\right)=\\
=&\left(\dots, \bigvee_n f^*(b_1^n), \bigvee_n f^*(b_2^n), \dots, \bigvee_n f^*(b_n^n), \dots\right)=\\
=&\bigvee_n \left(\dots, f^*(b_1^n), f^*(b_2^n), \dots,  f^*(b_n^n), \dots\right)=\\
=&\bigvee_n f(\dots, b_1^n, b_2^n, \dots b_m^n, \dots)= \bigvee_n f(a_n).
\end{align*}
\end{proof}

\subsection{Infinitary classical propositional logic}\label{ksec}
The idea of endowing classical propositional logic with infinitely long sentences is relatively new, and the first published results are due to C.R. Karp \cite{Karp} and D. Scott and A. Tarski \cite{ScottTarski} in the beginning of the Sixties. In both cases, the authors treat a much more general case than what is needed in the current investigation. We briefly recall it here.

Given $\kappa$ an infinite cardinal, the core idea is to extend classical propositional logic by defining a language in which one can build conjunctions of sets of formulas of cardinality $\alpha < \kappa$.

A complete treatise of the subject is \cite{Karp}, in which the logical system $\mathcal{B}_{\kappa}$ is defined  as the system whose connectives are $\rightarrow, \neg, \bigwedge$ and whose axioms are the following:
\begin{enumerate}[label=(IL\arabic*)]
\item \label{def:IL:item1} $\varphi\rightarrow(\psi\rightarrow \varphi)$
\item \label{def:IL:item2} $(\varphi\rightarrow(\psi\rightarrow \chi))\rightarrow((\varphi\rightarrow\psi)\rightarrow(\varphi \rightarrow\chi))$
\item \label{def:IL:item3} $(\neg \varphi \rightarrow\neg \psi)\rightarrow(\psi \rightarrow \varphi)$
\item \label{def:IL:item4} $(\bigwedge_{\eta \le \alpha} (\varphi_{\alpha}\rightarrow \varphi_{\eta}))\rightarrow (\varphi_{\alpha}\rightarrow\bigwedge_{\eta\le \alpha}\varphi_{\eta})$, for $0< \alpha <\kappa$
\item \label{def:IL:item5} $\bigwedge_{\eta\le \alpha}\varphi_{\eta}\rightarrow \varphi_{\nu}$, where $\nu<\alpha$.
\end{enumerate}
Deduction rules are Modus Ponens and the following

$$\text{(INF) \quad }\frac{\varphi_1, \dots \varphi_{\alpha}}{\bigwedge_{\eta\le \alpha}\varphi_{\eta}} $$

The system we present in Section \ref{sec:logic} has a slightly different axiomatization, but it is heavily inspired by $\mathcal{B}_{\kappa}$.

\section{The logic \NL}\label{sec:logic}

As recalled in Section \ref{sec:prelimRMV}, it is possible to obtain a conservative extension of \L ukasiewicz logic by expanding the language with a collection of connectives $ \nabla_\alpha$, with $\alpha \in [0,1]$. In this section we will build an infinitary system, based on \cite{Karp}, that will allow us to obtain a logic whose models are spaces of continuous functions.

Let us consider a countable set of propositional variables and the connectives $\neg$, $\rightarrow$, $\{ \nabla_\alpha\}_{\alpha\in [0,1]}$, $\bigvee$. The connectives $\neg$, $\rightarrow$, $\{ \nabla_\alpha\}_{\alpha\in [0,1]}$ are the correspondent ones from the logic $\mathcal{R}$\L, while the latter is a connective of arity less than or equal to $\omega$, i.e. it is defined for any set of formulas which is at most countable. Consider now the following set of axioms:
\begin{enumerate}[label=(L\arabic*)]
\item \label{L:item1}  $\varphi \rightarrow (\psi \rightarrow \varphi)$

\item \label{L:item2} $(\varphi \rightarrow \psi)\rightarrow((\psi \rightarrow \chi)\rightarrow(\varphi \rightarrow \chi))$

\item \label{L:item3} $((\varphi \rightarrow \psi)\rightarrow \psi)\rightarrow ((\psi \rightarrow \varphi)\rightarrow \varphi)$

\item \label{L:item4} $(\neg \psi \rightarrow \neg \varphi)\rightarrow (\varphi \rightarrow \psi)$
\end{enumerate}
\begin{enumerate}[label=(R\arabic*)]
\item \label{R:item1}  $\nabla_{\alpha}(\varphi \rightarrow \psi)\leftrightarrow (\nabla_{\alpha}\varphi \rightarrow \nabla_{\alpha}\psi)$
\item \label{R:item2} $ \nabla_{(\alpha \odot \beta ^*)}\varphi \leftrightarrow (\nabla_{\beta}\varphi \rightarrow \nabla_{\alpha}\varphi)$

\item \label{R:item3} $\nabla_{\alpha}(\nabla_{\beta}\varphi)\leftrightarrow \nabla_{\alpha \cdot \beta} \varphi$
\item \label{R:item4} $\nabla_1 \varphi \leftrightarrow \varphi$
\end{enumerate}
\begin{enumerate}[label=(S\arabic*)]
\item \label{S:item1}  $\varphi_k  \rightarrow \bigvee_{n\in \mathbb{N}}\varphi_n $, for any $k\in \mathbb{N}$.
\end{enumerate}

\Cref{L:item1,L:item2,L:item3,L:item4} are the axioms of \L ukasiewicz logic, \cref{R:item1,R:item3,R:item2,R:item4} are the additional axioms of the logic of Riesz MV-algebras, the deduction rules will be the \textit{Modus Ponens} and the following:
$$
\text{(SUP) \quad }\frac{(\varphi_1 \rightarrow \psi),  \dots, (\varphi_k\rightarrow \psi) \dots}{\bigvee_{n\in \mathbb{N}}\varphi_n \rightarrow \psi}
$$
Theorems, deductions, proofs and so on are defined as usual. Moreover, we can define the derivative connective $\bigwedge_{n\in \mathbb{N}}\varphi_n$ as $\neg \bigvee_{n\in \mathbb{N}} \neg \varphi_n$. 

As we shall prove subsequently (see \Cref{rem:sost-equiv} and \Cref{pro:consext}) our system is a conservative extension of the logic of Riesz MV-algebras.

We should have defined axiom (S1) and (SUP) taking into account the possibility of applying the connective $\bigvee$ to a set of formulas of cardinality $< \omega$. It is understood in what follows that we allow such cases by setting $\bigvee\{\varphi, \psi \}$ to coincide with $\bigvee\{ \varphi, \psi, \bot, \bot, \dots \}$ for countably many $\bot$.

\begin{notation}
Let $\varphi$ and $\psi$ be two formulas of \NL . We shall write $\varphi \le \psi$ whenever $\vdash \varphi \rightarrow \psi$. Moreover, we will call a sequence of formulas  $\{ \varphi_n \}_{n\in \mathbb{N}}\subseteq FORM_{IRL}$ increasing (decreasing) if $\varphi_k\le \varphi_{k+1}$ ($\varphi_k\le \varphi_{k-1}$).
\end{notation}

\begin{remark}
In \L ukasiewicz logic, the binary disjunction $\varphi \vee \psi$ is defined as an abbreviation for $(\varphi\rightarrow\psi)\rightarrow\psi$. In the following proposition we will see, among other properties, that $\varphi \vee \psi$ is equivalent to $\bigvee \{\varphi, \psi \}$.
\end{remark}

\begin{proposition}\label{pro:basic}
The following properties hold: 
\begin{enumerate}[label=(\roman*)]
\item \label{pro:basic:item1} Let $\{\varphi_1, \varphi_2, \dots , \varphi_n, \dots \}$ and $\{\psi_1, \psi_2, \dots , \psi_n, \dots \}$ be two sequences of formulas in \NL . If $\vdash \varphi_k \rightarrow \psi_k$ for any $k$, then $\vdash \bigvee_{n\in \mathbb{N}}\varphi_n \rightarrow \bigvee_{n\in \mathbb{N}}\psi_n $;
\item \label{pro:basic:item6} (Idempotency) If $\{\varphi_1, \varphi_2, \dots , \varphi_n, \dots \}$ is a sequence such that $\vdash \varphi_k\leftrightarrow \varphi$ for any $k\in \mathbb{N}$, then $\vdash \bigvee_{n\in \mathbb{N}}\varphi_n\leftrightarrow \varphi$.
\item \label{pro:basic:item2} Let $\varphi, \psi$ be formulas in \NL, then $\vdash (\varphi \vee \psi) \leftrightarrow \bigvee \{\varphi, \psi\}$;
\item \label{pro:basic:item3}The following derivative rule holds: $\displaystyle{(INF) \quad\frac{\psi\rightarrow \varphi_1, \dots, \psi \rightarrow \varphi_k}{\psi \rightarrow \bigwedge_{n \in \mathbb{N}}\varphi_n}}$.
\item \label{pro:basic:item4} $\vdash \bigwedge_{n\in \mathbb{N}} \varphi_n \rightarrow \varphi_k$, for any $k\in \mathbb{N}$.
\item \label{pro:basic:item5} (Distributivity of $\odot$) $\vdash \left(\varphi  \odot \bigvee_{n\in \mathbb{N}} \psi_n\right)\leftrightarrow \bigvee_{n\in \mathbb{N}}\left( \varphi\odot \psi_n\right)$, where $\varphi\odot \psi$ is defined as $\neg( \varphi\to \neg\psi)$.
\item \label{pro:basic:item7} (Distributivity of $\rightarrow$) $\vdash \left(\varphi  \rightarrow \bigwedge_{n\in \mathbb{N}} \psi_n\right)\leftrightarrow \bigwedge_{n\in \mathbb{N}}\left( \varphi\rightarrow \psi_n\right)$.
\end{enumerate}
\end{proposition}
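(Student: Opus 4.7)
The plan is to handle each item by combining the infinitary axiom \ref{S:item1} and the rule (SUP) with standard facts from finitary \L{}ukasiewicz/Riesz logic. Throughout I will use that $\odot$ and $\to$ form a residuated pair, i.e.\ $\vdash(\alpha\odot\beta)\to\gamma$ iff $\vdash\alpha\to(\beta\to\gamma)$, and that $\vee$ is the lattice join of $\to$ in \L{}ukasiewicz logic.

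For \ref{pro:basic:item1}, from $\vdash \varphi_k\to\psi_k$ and axiom \ref{S:item1} giving $\vdash \psi_k\to\bigvee_n \psi_n$, transitivity (\ref{L:item2}) yields $\vdash \varphi_k\to\bigvee_n\psi_n$ for every $k$; applying (SUP) gives the claim. For \ref{pro:basic:item6}, one direction is (SUP) applied to $\vdash\varphi_n\to\varphi$; the other uses $\vdash\varphi\to\varphi_1$ composed with \ref{S:item1}. For \ref{pro:basic:item2}, the padding convention $\bigvee\{\varphi,\psi\}=\bigvee\{\varphi,\psi,\bot,\bot,\dots\}$ allows (SUP) to reduce $\bigvee\{\varphi,\psi\}\to(\varphi\vee\psi)$ to checking each conjunct, using $\vdash\bot\to\chi$; the converse uses that $\varphi\vee\psi$ is the least upper bound together with two instances of \ref{S:item1}.

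Items \ref{pro:basic:item3} and \ref{pro:basic:item4} are the duals, obtained via the definition $\bigwedge_n \varphi_n:=\neg\bigvee_n\neg\varphi_n$ and the contraposition axiom \ref{L:item4}. Concretely, for \ref{pro:basic:item3}, from $\vdash\psi\to\varphi_k$ one deduces $\vdash\neg\varphi_k\to\neg\psi$, then (SUP) yields $\vdash\bigvee_n\neg\varphi_n\to\neg\psi$, and a final contraposition and double-negation elimination give $\vdash\psi\to\bigwedge_n\varphi_n$. Item \ref{pro:basic:item4} is \ref{S:item1} for $\neg\varphi_n$ followed by contraposition.

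For \ref{pro:basic:item5}, the direction $\bigvee_n(\varphi\odot\psi_n)\to \varphi\odot\bigvee_n\psi_n$ comes from monotonicity of $\odot$ (using \ref{S:item1} and \ref{pro:basic:item1}) together with (SUP); the reverse direction uses residuation: it suffices to show $\vdash \bigvee_n\psi_n\to(\varphi\to\bigvee_n(\varphi\odot\psi_n))$, which by (SUP) reduces to $\vdash\psi_k\to(\varphi\to\bigvee_n(\varphi\odot\psi_n))$, equivalent via residuation to $\vdash\varphi\odot\psi_k\to\bigvee_n(\varphi\odot\psi_n)$, an instance of \ref{S:item1}. Finally, \ref{pro:basic:item7} uses \ref{pro:basic:item4} to obtain $\vdash(\varphi\to\bigwedge_n\psi_n)\to(\varphi\to\psi_k)$ for each $k$, then the derived rule \ref{pro:basic:item3} with $\chi:=(\varphi\to\bigwedge_n\psi_n)$ as antecedent; the reverse direction applies residuation to reduce to $\vdash(\bigwedge_n(\varphi\to\psi_n)\odot\varphi)\to\psi_k$, which follows from \ref{pro:basic:item4} and residuation.

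The main obstacle I anticipate is the bookkeeping for items \ref{pro:basic:item5} and \ref{pro:basic:item7}: every use of residuation must be justified inside \NL, and one must be careful that the meta-rule (SUP) is applied only to hypothesis-free derivations (or with the standard \L{}ukasiewicz local deduction theorem), so that the inductive uses of (SUP) inside the proof are legitimate. Once residuation and the dual-via-contraposition technique are set up, all seven items are short.
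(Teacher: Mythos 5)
Your argument is correct, and for items \ref{pro:basic:item1}--\ref{pro:basic:item4} it coincides with the paper's proof (axiom \ref{S:item1} plus transitivity plus (SUP) for the suprema, and the dual statements obtained by contraposition \ref{L:item4} through the definition $\bigwedge_n\varphi_n:=\neg\bigvee_n\neg\varphi_n$; for \ref{pro:basic:item2} the paper invokes the ``proof by cases'' theorem where you invoke the least-upper-bound property of $\vee$, which is the same fact). Where you genuinely diverge is in the two distributivity items. For \ref{pro:basic:item5} the paper proves the hard direction by passing to the negation $\neg\bigvee_n(\varphi\odot\psi_n)$, unfolding it as an infinitary conjunction, and then applying the exchange law and (SUP) before contraposing back; your residuation argument ($\vdash(\alpha\odot\beta)\to\gamma$ iff $\vdash\alpha\to(\beta\to\gamma)$, which is sound since $(x\odot y)^*\oplus z=x^*\oplus(y^*\oplus z)$ in any MV-algebra) reaches the same conclusion in the positive form and is noticeably shorter. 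For \ref{pro:basic:item7} the paper simply instantiates \ref{pro:basic:item5} at the sequence $\{\neg\psi_n\}_n$ and negates both sides, whereas you give an independent proof from \ref{pro:basic:item3} and \ref{pro:basic:item4}; both work, the paper's being a one-line De Morgan corollary and yours making the residuation structure explicit. Two small presentational points: in your reverse direction of \ref{pro:basic:item7}, the reduction from the target $\bigwedge_n\psi_n$ to the individual $\psi_k$ is an application of the derived rule \ref{pro:basic:item3} and should be cited as such; and your worry about (SUP) under hypotheses is moot here, since every application in this proposition is to theorems of the empty theory.
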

\begin{proof}
\ref{pro:basic:item1} By hypothesis, we have $\vdash \varphi_k \rightarrow \psi_k$. Combined with axioms \ref{S:item1} and \ref{L:item2}, it implies that $\vdash \varphi_k \rightarrow \bigvee_{n\in \mathbb{N}}\psi_n$ for any $k\in \mathbb{N}$. Hence, by (SUP) $\vdash \bigvee_{n\in \mathbb{N}}\varphi_n \rightarrow \bigvee_{n\in \mathbb{N}}\psi_n $. 

\vspace*{0.2cm}
\ref{pro:basic:item6} By hypothesis, $\vdash \varphi_k\rightarrow \varphi$ and $\vdash \varphi\rightarrow \varphi_k$ for any natural $k$. Thus, from the first theorem and (SUP) we deduce  $\vdash \bigvee_{n\in \mathbb{N}}\varphi_n\rightarrow \varphi $, while from the second theorem and \ref{S:item1} we infer $\vdash \varphi \rightarrow \bigvee_{n\in \mathbb{N}}\varphi_n$, which altogether settle the claim.

\vspace*{0.2cm}
\ref{pro:basic:item2} By \ref{S:item1} we derive $\vdash \varphi \rightarrow \bigvee \{\varphi, \psi\}$ and $\vdash \psi \rightarrow\bigvee \{\varphi, \psi\}$. By the ``proof by cases" theorem \cite[Theorem 1.2.11]{Chapter1-HB} we derive $\vdash (\varphi \vee \psi) \rightarrow \bigvee \{\varphi, \psi\}$. In the other direction, by theorems of \L ukasiewicz logic, $\vdash \varphi \rightarrow (\varphi \vee \psi)$ and $\vdash \psi \rightarrow (\varphi \vee \psi)$. Hence, by (SUP), $\vdash \bigvee \{\varphi, \psi\}  \rightarrow (\varphi \vee \psi)$ and the claim is settled. 

\vspace*{0.2cm}
\ref{pro:basic:item3} It is easily derived from (SUP), \ref{L:item4} and the definition of $\bigwedge$.

\vspace*{0.2cm}
\ref{pro:basic:item4} It follows from \ref{S:item1}, \ref{L:item4} and the definition of $\bigwedge$.

\vspace*{0.2cm}
\ref{pro:basic:item5} For any $k \in\mathbb{N}$, $\vdash \psi_k \to \bigvee_{n\in \mathbb{N}}\psi_n$, hence by the congruence property of $\odot$, $\vdash (\varphi \odot \psi_k) \to \left(\varphi \odot \bigvee_{n\in \mathbb{N}}\psi_n\right)$. By (SUP), $\vdash \bigvee_{n\in \mathbb{N}}\left( \varphi\odot \psi_n\right)\to \left(\varphi  \odot \bigvee_{n\in \mathbb{N}} \psi_n\right)$.

To prove the converse implication, we have the following:

$\vdash \neg \bigvee_{n\in \mathbb{N}}(\varphi \odot \psi_n) \leftrightarrow \bigwedge_{n\in \mathbb
N} \neg (\varphi \odot \psi_n)$, by definition of $\bigwedge$;

$\vdash \bigwedge_{n\in \mathbb
N} \neg (\varphi \odot \psi_n) \leftrightarrow  \bigwedge_{n\in \mathbb
N} (\varphi \rightarrow \neg \psi_n)$, by definition of $\odot$;

$\vdash \bigwedge_{n\in \mathbb N} (\varphi \rightarrow \neg \psi_n) \leftrightarrow \bigwedge_{n\in \mathbb N} (\psi_n \rightarrow \neg \varphi)$, by \ref{L:item4};

$\vdash \bigwedge_{n\in \mathbb N} (\psi_n \rightarrow \neg \varphi) \rightarrow \left( \psi_k \rightarrow \neg \varphi\right)$, by \ref{pro:basic:item4} and for any $k \in \mathbb{N}$;

hence $\vdash \neg \bigvee_{n\in \mathbb{N}}(\varphi \odot \psi_n) \rightarrow \left( \psi_k \rightarrow \neg \varphi\right)$, for any $k \in \mathbb{N}$.

It follows that $\vdash \psi_k \rightarrow \left( \neg \bigvee_{n\in \mathbb{N}}(\varphi \odot \psi_n) \rightarrow \neg \varphi \right) $ for any $k \in \mathbb{N}$ by the exchange law. By (SUP) $\vdash \bigvee_{n\in \mathbb{N}}\psi_n \rightarrow \left( \neg \bigvee_{n\in \mathbb{N}}(\varphi \odot \psi_n) \rightarrow \neg \varphi \right) $,

and again by the exchange law, $\vdash \neg \bigvee_{n\in \mathbb{N}}(\varphi \odot \psi_n) \rightarrow \left( \bigvee_{n\in \mathbb{N}}\psi_n \rightarrow \neg \varphi \right)$.

Finally, since $\vdash \left( \bigvee_{n\in \mathbb{N}}\psi_n \rightarrow \neg \varphi \right) \rightarrow \left( \varphi \rightarrow \neg \bigvee_{n\in \mathbb{N}}\psi_n \right) $, and

$\vdash  \left( \varphi \rightarrow \neg \bigvee_{n\in \mathbb{N}}\psi_n \right) \leftrightarrow \neg \left(  \varphi \odot \bigvee_{n\in \mathbb{N}}\psi_n \right)$, we get

$ \vdash \neg \bigvee_{n\in \mathbb{N}}(\varphi \odot \psi_n) \rightarrow  \neg \left(  \varphi \odot \bigvee_{n\in \mathbb{N}}\psi_n \right)$.

A final application of \ref{L:item4} entails $\vdash\left(  \varphi \odot \bigvee_{n\in \mathbb{N}}\psi_n \right)\rightarrow \bigvee_{n\in \mathbb{N}}(\varphi \odot \psi_n)$, which settles the claim.

\vspace*{0.2cm}
\ref{pro:basic:item7} Let us apply \ref{pro:basic:item5} to the sequence $\{ \neg \psi_n\}_{n\in \mathbb{N}}$. We have

$\vdash \left(\varphi  \odot \bigvee_{n\in \mathbb{N}} \neg \psi_n\right)\leftrightarrow \bigvee_{n\in \mathbb{N}}\left( \varphi\odot \neg \psi_n\right)$, and consequently,

$\vdash \neg \left(\varphi  \odot \bigvee_{n\in \mathbb{N}} \neg \psi_n\right)\leftrightarrow \neg \bigvee_{n\in \mathbb{N}}\left( \varphi\odot \neg \psi_n\right)$.\\
By definition of $\bigwedge$,

$\vdash \neg \left(\varphi  \odot \neg \bigwedge_{n\in \mathbb{N}} \psi_n\right)\leftrightarrow  \bigwedge_{n\in \mathbb{N}}\neg \left( \varphi\odot \neg \psi_n\right)$, and by definition of $\varphi\odot\psi$ as $\neg (\varphi\rightarrow \neg \psi)$, 

$\vdash \left(\varphi  \rightarrow \bigwedge_{n\in \mathbb{N}} \psi_n\right)\leftrightarrow  \bigwedge_{n\in \mathbb{N}}\left( \varphi\rightarrow \psi_n\right)$.
\end{proof}

\begin{remark}\label{rem:sost-equiv}
\Cref{pro:basic}\ref{pro:basic:item1} implies that \NL \ has the property of substitution of equivalents. It follows from the fact that the logic $\mathcal{R}$\L \ has this property and the fact that in the proof of \Cref{pro:basic}\ref{pro:basic:item1} we have used the property on the level of the connectives of Riesz logic. As a consequence, we can safely use theorems that hold in the logic $\mathcal{R}$\L. \end{remark}

\begin{definition}\label{def:limit}
Given a sequence of formulas $\{ \varphi_n \}_{n\in \mathbb{N}}\subseteq FORM_{IRL}$, we say that $\varphi\in FORM_{IRL}$ is the \emph{order limit} of $\{ \varphi_n\}_{n}$ and write $\varphi=\lim^o_n \varphi_n$ if, and only if, there exists an increasing sequence of formulas $\{ \psi_n \}_{n \in \mathbb{N}}\subseteq FORM_{IRL}$ such that $\vdash \bigvee_{n\in \mathbb{N}}\psi_n$ and $\vdash \psi_n \rightarrow (\varphi \leftrightarrow \varphi_n)$ for any $n\in \mathbb{N}$.
\end{definition}
\begin{remark}
As will be explored in more detail in Section \ref{sec:KHSpace}, Definition \ref{def:limit} is reflected in the Lindenbaum-Tarski algebra in the notion of order convergence, while a stronger definition can be proved to be equivalent to the uniform convergence of term functions.
\end{remark}

\begin{proposition}\label{pro:limitProperties}
\begin{enumerate}[label=(\roman*)]
\item \label{pro:limitProperties:item1} If $\varphi=\lim^o_n \varphi_n$, then $\lim^o_n \neg \varphi_n=\neg \varphi$;
\item \label{pro:limitProperties:item2}  If $\varphi=\lim^o_n \varphi_n$ and $\psi=\lim^o_n \psi_n$, then $\lim^o_n (\varphi_n \odot \psi_n)=\varphi\odot \psi$, where $\varphi\odot \psi$ is defined as $\neg (\varphi\rightarrow \neg\psi)$;
\item \label{pro:limitProperties:item3}  Let $\varphi$ and $\psi$ be both limits of a sequence of formulas $\{ \varphi_n \}_{n}\subseteq FORM_{IRL}$. Then $\vdash \varphi \leftrightarrow \psi$;
\item \label{pro:limitProperties:item4}  If $\varphi=\lim^o_n \varphi_n$ and $\vdash \varphi\leftrightarrow \psi$, then $\psi=\lim^o_n \varphi_n$;
\item \label{pro:limitProperties:item5}  If $\varphi=\lim^o_n \varphi_n$ and $\psi=\lim^o_n \psi_n$, then $\lim^o_n (\varphi_n \oplus \psi_n)=\varphi\oplus \psi$, where $\varphi\oplus \psi$ is defined as $\neg \varphi\rightarrow \psi$.
\end{enumerate}
\end{proposition}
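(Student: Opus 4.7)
The plan is to exploit, in each case, the witnessing increasing sequences provided by Definition \ref{def:limit} together with the substitution of equivalents (Remark \ref{rem:sost-equiv}) and the (SUP)-based properties collected in Proposition \ref{pro:basic}. Items \ref{pro:limitProperties:item4} and \ref{pro:limitProperties:item1} require no new witness: for \ref{pro:limitProperties:item4} the hypothesis $\vdash \varphi \leftrightarrow \psi$ lets me replace $\varphi$ by $\psi$ in $\vdash \psi_n \to (\varphi \leftrightarrow \varphi_n)$; for \ref{pro:limitProperties:item1} the Riesz-logic theorem $\vdash (\varphi \leftrightarrow \varphi_n) \to (\neg\varphi \leftrightarrow \neg\varphi_n)$ upgrades the given implication to $\vdash \psi_n \to (\neg\varphi \leftrightarrow \neg\varphi_n)$.

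For item \ref{pro:limitProperties:item2}, with witnesses $\{\psi_n\}$ for $\varphi$ and $\{\psi'_n\}$ for $\psi$, the natural joint witness is $\theta_n := \psi_n \odot \psi'_n$. This sequence is increasing by monotonicity of $\odot$, and $\vdash \theta_n \to \bigl((\varphi \odot \psi) \leftrightarrow (\varphi_n \odot \psi_n)\bigr)$ follows by substitution of equivalents, since $\theta_n$ forces both $\varphi \leftrightarrow \varphi_n$ and $\psi \leftrightarrow \psi_n$. The remaining obligation is $\vdash \bigvee_n \theta_n$. Using distributivity of $\odot$ over $\bigvee$ (Proposition \ref{pro:basic}\ref{pro:basic:item5}) iteratively gives
\begin{equation*}
\vdash \Bigl(\bigvee_n \psi_n\Bigr) \odot \Bigl(\bigvee_m \psi'_m\Bigr) \leftrightarrow \bigvee_n \bigvee_m (\psi_n \odot \psi'_m);
\end{equation*}
monotonicity of the two witness sequences yields $\psi_n \odot \psi'_m \le \theta_{\max(n,m)}$, which combined with (SUP) and Proposition \ref{pro:basic}\ref{pro:basic:item1} collapses the double disjunction to $\bigvee_k \theta_k$. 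Since both $\bigvee_n \psi_n$ and $\bigvee_m \psi'_m$ are theorems, so is the left-hand side, hence $\vdash \bigvee_k \theta_k$, as required.

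For item \ref{pro:limitProperties:item3}, given two limits $\varphi, \varphi'$ of the same sequence $\{\varphi_n\}$ with witnesses $\{\psi_n\}, \{\psi'_n\}$, the combined witness $\theta_n := \psi_n \odot \psi'_n$ satisfies $\vdash \theta_n \to (\varphi \leftrightarrow \varphi')$ by transitivity of $\leftrightarrow$. Splitting $\leftrightarrow$ into its two implications and applying (SUP) to each yields $\vdash \bigvee_n \theta_n \to (\varphi \to \varphi')$ and its symmetric counterpart, and $\vdash \varphi \leftrightarrow \varphi'$ then follows by modus ponens from $\vdash \bigvee_n \theta_n$ established in \ref{pro:limitProperties:item2}. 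Finally, \ref{pro:limitProperties:item5} is obtained by chaining \ref{pro:limitProperties:item1} and \ref{pro:limitProperties:item2} via the De Morgan identity $\varphi \oplus \psi \leftrightarrow \neg(\neg\varphi \odot \neg\psi)$ and using \ref{pro:limitProperties:item4} to transfer the limit across this equivalence.

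I expect the only genuine obstacle to be the derivation $\vdash \bigvee_n \theta_n$ inside \ref{pro:limitProperties:item2}: it requires reducing a double infinitary disjunction back to its diagonal, which is precisely where the monotonicity of the witnesses and the countable (SUP) rule must be deployed together; once this reduction is in hand, every other item is a substitution-of-equivalents argument or a direct algebraic combination of earlier cases.
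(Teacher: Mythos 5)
Your proposal is correct and follows essentially the same route as the paper's proof: the same direct witness manipulations for \ref{pro:limitProperties:item1} and \ref{pro:limitProperties:item4}, the product witness $\chi_n\odot\varrho_n$ for \ref{pro:limitProperties:item2} and \ref{pro:limitProperties:item3} with the double disjunction collapsed to its diagonal via distributivity, monotonicity and (SUP), and the reduction of \ref{pro:limitProperties:item5} to the earlier items. The only cosmetic difference is that the paper makes explicit the \L ukasiewicz theorem $(\sigma_1\rightarrow\gamma_1)\odot(\sigma_2\rightarrow\gamma_2)\rightarrow(\sigma_1\odot\sigma_2\rightarrow\gamma_1\odot\gamma_2)$ where you appeal to substitution of equivalents, but the underlying argument is identical.
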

\begin{proof}
\ref{pro:limitProperties:item1} By hypothesis, there exists a sequence $\{ \psi_n \}_{n}\subseteq FORM_{IRL}$ such that $\vdash \bigvee_{n}\psi_n$ and $\vdash \psi_n \rightarrow (\varphi \leftrightarrow \varphi_n)$. Since $\vdash (\varphi \leftrightarrow \varphi_n)\rightarrow(\neg \varphi \leftrightarrow \neg \varphi_n)$, we deduce $\vdash \psi_n \rightarrow (\neg \varphi_n \leftrightarrow \neg  \varphi)$ and the claim is settled.

\vspace*{0.2cm}
\ref{pro:limitProperties:item2} By hypothesis, there exist increasing sequences $\{ \chi_n \}_{n},\ \{ \varrho_n \}_{n} \subseteq FORM_{IRL}$ such that $\vdash \bigvee_{n}\chi_n$, $\vdash \bigvee_{n}\varrho_n$ and $\vdash \chi_n \rightarrow (\varphi \leftrightarrow \varphi_n)$ and $\vdash \varrho_n \rightarrow (\psi \leftrightarrow \psi_n)$.\\
From the theorems of \L, 
\[\vdash (\sigma_1 \rightarrow \gamma_1)\odot (\sigma_2 \rightarrow \gamma_2)\rightarrow (\sigma_1\odot \sigma_2 \rightarrow \gamma_1 \odot \gamma_2) \quad \text{ and }\quad \vdash \sigma\rightarrow(\gamma\rightarrow(\sigma\odot\gamma)),\]
we deduce $\vdash (\varrho_n \odot \chi_n) \rightarrow [(\varphi \leftrightarrow \varphi_n)\odot (\psi \leftrightarrow \psi_n)]$.

Being $\sigma\leftrightarrow \gamma$ defined as $(\sigma\rightarrow \gamma)\odot (\gamma\rightarrow \sigma)$, by commutativity of $\odot$, we deduce 
\[\vdash [(\varphi \leftrightarrow \varphi_n)\odot (\psi \leftrightarrow \psi_n)] \leftrightarrow [(\varphi \odot \psi)\leftrightarrow (\varphi_n \odot \psi_n)],\]
hence $\vdash (\varrho_n \odot \chi_n) \rightarrow  [(\varphi \odot \psi)\leftrightarrow (\varphi_n \odot \psi_n)]$. 

It remains to prove that $\vdash \bigvee_n (\chi_n \odot \varrho_n)$. It is easily seen from the distributivity of $\bigvee$ over $\odot$ that $\vdash \bigvee_n \bigvee_m (\varrho_n \odot \chi_m)\leftrightarrow \left(\bigvee_n\chi_n\odot\bigvee_m\varrho_m\right)$, and therefore $\vdash \bigvee_n \bigvee_m (\varrho_n \odot \chi_m)$. 

Since both sequences are increasing, for any $n, m\in \mathbb{N}$ let $k=\max(n,m)$, then $\varrho_n\odot \chi_m\le \varrho_k\odot\chi_k$, $\vdash \bigvee_m\bigvee_n (\chi_m \odot \varrho_n) \rightarrow \bigvee_n (\varrho_n \odot \chi_n)$ by applications of (SUP) and \Cref{pro:basic}\ref{pro:basic:item1}, thus $\vdash \bigvee_n \varrho_n \odot \chi_n $.

\vspace*{0.2cm}
\ref{pro:limitProperties:item3} Let $\varphi=\lim^o_n \varphi_n =\psi$. Hence,  there exist increasing sequences $\{ \chi_n \}_{n},\ \{ \varrho_n \}_{n} \subseteq FORM_{IRL}$ such that $\vdash \bigvee_{n}\chi_n$, $\vdash \bigvee_{n}\varrho_n$ and $\vdash \chi_n \rightarrow (\varphi \leftrightarrow \varphi_n)$ and $\vdash \varrho_n \rightarrow (\psi \leftrightarrow \varphi_n)$.

Similarly to \ref{pro:limitProperties:item2}, we get $\vdash (\varrho_n \odot \chi_n) \rightarrow  [(\varphi \leftrightarrow \varphi_n)\odot (\psi \leftrightarrow \varphi_n)]$. Since, by definition of $\leftrightarrow$, $\vdash [(\varphi \leftrightarrow \varphi_n)\odot (\psi \leftrightarrow \varphi_n)]\rightarrow(\varphi \leftrightarrow \psi)$, for any $n$ we derive $\vdash (\varrho_n \odot \chi_n) \rightarrow  (\varphi \leftrightarrow \psi)$.

By (SUP), $\vdash \bigvee_n(\varrho_n \odot \chi_n) \rightarrow  (\varphi \leftrightarrow \psi)$. Finally, as in \ref{pro:limitProperties:item2}, $\vdash \bigvee_n \varrho_n \odot \chi_n $ and the claim is settled.

\vspace*{0.2cm}
\ref{pro:limitProperties:item4} It follows easily by the properties of logical equivalence.

\vspace*{0.2cm}
\ref{pro:limitProperties:item5} It is a straightforward application of \cref{pro:limitProperties:item1,pro:limitProperties:item2,pro:limitProperties:item3,pro:limitProperties:item4}.
\end{proof}

As usual, we can consider the Lindenbaum-Tarski algebra of the logic \NL . Let $\Theta \subset FORM_{IRL}$ be a subset of formulas of \NL . The relation defined by 
$$\varphi \equiv_{\Theta} \psi \mbox{\quad if, and only if, \quad} \Theta \vdash \varphi \rightarrow \psi \mbox{ and }\Theta \vdash \psi \rightarrow \varphi,$$
is a congruence relation. Thus we define, on the quotient $FORM_{IRL}/{\equiv_{\Theta}}$, the following operations:
\begin{enumerate}
\item $0_{\Theta}=[\bot]_{\Theta}$;
\item $[\varphi]_{\Theta} \rightarrow [\psi]_{\Theta}= [\varphi \rightarrow \psi]_{\Theta}$;
\item $[\varphi ]_{\Theta}^*=[\neg \varphi]_{\Theta}$;
\item $[\varphi]_{\Theta} \oplus [\psi]_{\Theta}=[\neg \varphi\rightarrow \psi]_{\Theta}$;
\item $\alpha [\varphi]_{\Theta}=[\Delta_{\alpha}(\varphi)]_{\Theta}$, where $\Delta_{\alpha} \varphi$ stands for $\neg \nabla_{\alpha}(\neg \varphi)$;
\item $\bigvee_{n\in \mathbb{N}} [\varphi_n]_{\Theta}=[\bigvee_{n\in \mathbb{N}}\varphi_n]_{\Theta}$.
\end{enumerate}
By \cite{LeuRMV}, all operations from (1) to (5) are well defined and it is easily seen that $1_{\Theta}=0_{\Theta}^*$ coincide with the set of syntactical consequences of $\Theta$. (6) is well defined by Proposition \ref{pro:basic}(i): indeed, $\varphi_n \equiv \psi_n$ for any $n\in \mathbb{N}$ implies $\bigvee_n \varphi_n \equiv \bigvee_n \psi_n$. When $\Theta = \emptyset$, the quotient will be denoted by $IRL$. We recall that the order on $IRL$ is defined by $[\varphi]\le [\psi]$ if, and only if, $\vdash \varphi \rightarrow \psi$.

\begin{proposition}
$IRL$ is a $\sigma$-complete Riesz MV-algebra.
\end{proposition}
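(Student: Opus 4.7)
The strategy is to split the claim into two parts. First, we verify that $IRL$ is a Riesz MV-algebra with respect to the operations (1)--(5); second, we show that the countable operation $\bigvee_{n}$ witnesses the existence of suprema of countable families in the induced order, so that $IRL$ is $\sigma$-complete.

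For the first part, the axioms \ref{L:item1}--\ref{L:item4} and \ref{R:item1}--\ref{R:item4} of \NL{} are exactly the axiom schemes of $\mathcal{R}\L$, and Modus Ponens is one of our deduction rules. By \Cref{rem:sost-equiv} the logic \NL{} inherits the substitution-of-equivalents property on the connectives $\neg, \to, \{\nabla_\alpha\}_{\alpha \in [0,1]}$, so the definitions of $0_\Theta$, $\oplus$, $\to$, $^*$ and scalar multiplication pass to the quotient, and the standard computation from \cite{LeuRMV} shows that $FORM_{IRL}/\equiv_\emptyset$ endowed with (1)--(5) satisfies the equations defining a Riesz MV-algebra, with $1_{IRL}=[\top]$ being the class of theorems. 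The induced order is the one displayed in the excerpt, $[\varphi]\le [\psi]$ iff $\vdash \varphi\to\psi$.

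For the second part, fix a countable family $\{[\psi_n]\}_{n\in\mathbb N}$ of elements of $IRL$, pick representatives $\psi_n$ and set $s:=[\bigvee_{n}\psi_n]$. Well-definedness of $s$ (independence from the choice of representatives) is the content of \Cref{pro:basic}\ref{pro:basic:item1}: if $\psi_n\equiv \psi_n'$ for all $n$ then $\vdash \bigvee_n\psi_n\leftrightarrow\bigvee_n\psi_n'$. The axiom \ref{S:item1} gives $\vdash \psi_k\to\bigvee_n\psi_n$ for every $k$, so $[\psi_k]\le s$, i.e. $s$ is an upper bound. Conversely, if $[\chi]\in IRL$ satisfies $[\psi_k]\le [\chi]$ for every $k$, then $\vdash \psi_k\to\chi$ for every $k$, and the rule (SUP) yields $\vdash \bigvee_n\psi_n \to \chi$, i.e. $s\le [\chi]$. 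Hence $s=\bigvee_n [\psi_n]$ in the lattice order of $IRL$.

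There is no real obstacle here: the claim is essentially a direct reading of axiom \ref{S:item1} and the rule (SUP) through the quotient, modulo the congruence verification for $\bigvee$ that has already been recorded via \Cref{pro:basic}\ref{pro:basic:item1}. The only point deserving care is to make sure that every countable subset of $IRL$ arises this way, which is immediate because each class has at least one representative formula and the connective $\bigvee$ of \NL{} accepts arbitrary countable families of formulas (including finite ones by the padding convention with $\bot$ introduced before the notation block).
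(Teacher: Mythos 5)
Your proof is correct and follows essentially the same route as the paper: the Riesz MV-algebra structure is inherited from $\mathcal{R}\L$ via the operations (1)--(5) and \cite{LeuRMV}, well-definedness of $\bigvee$ on classes comes from \Cref{pro:basic}\ref{pro:basic:item1}, and the supremum property is read off directly from axiom \ref{S:item1} (upper bound) and the rule (SUP) (least upper bound). No further comment is needed.
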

\begin{proof}
The fact that $IRL$ is a Riesz MV-algebra follows from the definition of the operation in (1)-(5) and \cite[Proposition 5]{LeuRMV}. It is $\sigma$-complete since by (6) we are closing the algebra under all possible countable suprema. We only need to prove that $\bigvee$ is indeed a suprema in $IRL$. By (S1), it follows that $ [\varphi_k]\le [\bigvee_{n\in \mathbb{N}}\varphi_n]=\bigvee_{n\in \mathbb{N}}[\varphi_n]$, hence $\bigvee_{n\in \mathbb{N}}[\varphi_n]$ is an upper bound. By (SUP), if for some $[\psi]$, $[\varphi_k]\le [\psi]$ for any $k$, then $\bigvee_{n\in \mathbb{N}}[\varphi_n]\le[\psi]$ and $\bigvee_{n\in \mathbb{N}}[\varphi_n]$ is indeed the supremum of $\{ \varphi_n\}_{n\in \mathbb{N}}$.
\end{proof}

Thus, a suitable semantics for this logic has  $\sigma$-complete Riesz MV-algebras as models. In the next subsection we will go deeper in the study of our models, while we close this section with a completeness result \--- that will be improved subsequently \--- and a characterization of the algebra $IRL$ in terms of $RL$, the Lindenbaum-Tarski algebra of $\mathcal{R}\L$.

We define the notion of evaluation as usual. Let $A=(A, \oplus, ^*, \{ \alpha \}_{\alpha\in [0,1]}, 0)$ be a  $\sigma$-complete Riesz MV-algebra. A map $e: FORM_{IRL}\to A$ is an evaluation if:
\begin{enumerate}[label=(\roman*)]
\item $e(\neg \varphi)= e(\varphi)^*$,
\item $e(\varphi\rightarrow \psi)=e(\varphi)^*\oplus e(\psi)$,
\item $e(\Delta_\alpha \varphi)=\alpha e(\varphi)$, where $\Delta_{\alpha}$ is the dual connective of $\nabla_{\alpha}$,
\item $e(\bigvee_{n\in \mathbb{N}} \varphi_n)=\bigvee_{n\in \mathbb{N}} e(\varphi_n)$.
\end{enumerate}

\begin{proposition}[Completeness] \label{pro:completeness}
Let $\mathbf{RMV_{\sigma DC}}$ be the category of  $\sigma$-complete Riesz MV-algebras and $\sigma$-complete morphisms and $\varphi$ a formula of \NL. Then $\vdash_{IRL} \varphi$ if and only if $e(\varphi)=1$ for any evaluation $e:FORM_{IRL}\to R$ and any $R\in \mathbf{RMV_{\sigma DC}}$.
\end{proposition}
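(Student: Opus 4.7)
The proof is the standard Lindenbaum--Tarski argument adapted to the infinitary setting, and splits into a soundness and a completeness direction.

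For soundness ($\Rightarrow$), I would argue by induction on the length of a proof in $\mathcal{IR}\L$. The axiom schemas \ref{L:item1}--\ref{L:item4} and \ref{R:item1}--\ref{R:item4} are already known to be valid in every Riesz MV-algebra (they axiomatise $\mathcal{R}\L$), so they are in particular valid in every $\sigma$-complete one. Axiom \ref{S:item1} translates, under any evaluation $e$, to $e(\varphi_k)^{*}\oplus \bigvee_n e(\varphi_n)=1$, which holds because $e(\varphi_k)\le \bigvee_n e(\varphi_n)$ by definition of an evaluation. Modus ponens is preserved as usual. For the rule (SUP), if $e(\varphi_k\to\psi)=1$ for every $k$, i.e.\ $e(\varphi_k)\le e(\psi)$ for all $k$, then using clause (iv) in the definition of evaluation we get $e(\bigvee_n \varphi_n)=\bigvee_n e(\varphi_n)\le e(\psi)$, hence $e(\bigvee_n\varphi_n\to\psi)=1$. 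This is the one place where the $\sigma$-completeness of the algebra and the $\sigma$-preservation of the evaluation are essential.

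For completeness ($\Leftarrow$), I would use the canonical evaluation into the Lindenbaum--Tarski algebra $IRL$. By the preceding proposition, $IRL$ is a $\sigma$-complete Riesz MV-algebra, hence an object of $\mathbf{RMV_{\sigma DC}}$. Define $e_{c}\colon FORM_{IRL}\to IRL$ on propositional variables by $e_{c}(p)=[p]$ and extend inductively. The fact that $e_{c}$ is an evaluation is exactly the definition of the operations (1)--(6) on $IRL$: in particular $e_{c}(\bigvee_n\varphi_n)=[\bigvee_n\varphi_n]=\bigvee_n[\varphi_n]=\bigvee_n e_{c}(\varphi_n)$, where the middle equality is the defining clause of $\bigvee$ in $IRL$, which in turn is well defined by Proposition~\ref{pro:basic}\ref{pro:basic:item1}. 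Now assume $e(\varphi)=1$ for every evaluation in every $R\in\mathbf{RMV_{\sigma DC}}$. Applied to $e_{c}$ this gives $[\varphi]=1_{IRL}$, which, by construction of the congruence $\equiv_{\emptyset}$, is equivalent to $\vdash_{IRL}\varphi$.

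The only genuinely delicate point, and the one I would treat with care, is checking that the canonical map $e_{c}$ really is an evaluation in the sense of the four clauses above; once that is established the rest is formal. The non-trivial clause is (iv), and it is precisely the content of the definition of countable joins in $IRL$, together with Proposition~\ref{pro:basic}\ref{pro:basic:item1} which guarantees that this operation respects the congruence. All the other clauses are immediate from the definitions of $\rightarrow$, $\neg$ and $\Delta_\alpha$ on the quotient and from the fact (already recorded from \cite{LeuRMV}) that $IRL$ is a Riesz MV-algebra.
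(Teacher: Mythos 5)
Your proposal is correct and follows exactly the paper's argument: soundness by checking that all axioms are valid in every $\sigma$-complete Riesz MV-algebra and that Modus Ponens and (SUP) preserve validity, and completeness via the canonical evaluation $\varphi\mapsto[\varphi]$ into the Lindenbaum--Tarski algebra $IRL$. Your additional care in verifying that the canonical map satisfies clause (iv) of the definition of evaluation is a worthwhile elaboration of a step the paper leaves implicit, but it does not change the route.
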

\begin{proof}
For the left-to-right direction, it easy to check that all axioms are $R$-tautologies, for any $R\in \mathbf{RMV_{\sigma DC}}$, and that both  Modus Ponens and (SUP) lead tautologies to tautologies.

For the converse direction, it is enough to consider the evaluation $e: FORM_{IRL}\to IRL$ defined by $e(\varphi)=[\varphi]$. Being $[\varphi]=1$ if, and only if, $\vdash \varphi$, the claim is easily settled.
\end{proof}

As we shall prove in Section \ref{sec:loomis}, the system \NL\ also enjoys standard completeness. The standard model is $[0,1]$ with the natural structure of  $\sigma$-complete Riesz MV-algebra, see \Cref{inf-hsp} and \Cref{stdco}.

\begin{proposition} \label{pro:consext}
\NL \ is a conservative extension of $\mathcal{R}\L$.
\end{proposition}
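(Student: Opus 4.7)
My plan is to prove the two implications separately. The direction $\vdash_{\mathcal{R}\L}\varphi \Rightarrow \vdash_{\mathcal{IR}\L}\varphi$ I will dispose of immediately: axioms \ref{L:item1}--\ref{L:item4} and \ref{R:item1}--\ref{R:item4} together with Modus Ponens form the axiomatization of $\mathcal{R}$\L, and $\mathcal{IR}\L$ adds only \ref{S:item1} and the rule (SUP) on top of them, so any $\mathcal{R}$\L-derivation is literally an $\mathcal{IR}\L$-derivation.

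For the nontrivial direction I will argue semantically, using the standard model $[0,1]_{\mathbb R}$ as a bridge between the two systems. Assume $\varphi\in FORM_{\mathcal{R}\L}$ and $\not\vdash_{\mathcal{R}\L}\varphi$. Since the variety of Riesz MV-algebras is generated by $[0,1]_{\mathbb R}$ (i.e. $\mathcal{R}$\L\ is standard complete, as established in \cite{LeuRMV}), I obtain an $\mathcal{R}$\L-evaluation $e_0 \colon FORM_{\mathcal{R}\L} \to [0,1]_{\mathbb R}$ with $e_0(\varphi) < 1$.

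The central step is to lift $e_0$ to an evaluation $e\colon FORM_{\mathcal{IR}\L} \to [0,1]_{\mathbb R}$ in the sense of the four clauses (i)--(iv) preceding \Cref{pro:completeness}. This is legitimate because $[0,1]_{\mathbb R}$ is Dedekind complete, hence in particular a $\sigma$-complete Riesz MV-algebra. I will define $e$ by transfinite recursion on the ordinal rank of a formula: $e$ agrees with $e_0$ on propositional variables, the inductive clauses (i)--(iii) propagate through $\neg$, $\rightarrow$ and $\Delta_\alpha$, and at infinitary-disjunction stages I set $e(\bigvee_n \varphi_n) := \bigvee_n e(\varphi_n)$, the countable supremum existing in $[0,1]$. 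Because $\varphi$ contains no occurrence of $\bigvee$, a straightforward induction on its construction gives $e(\varphi) = e_0(\varphi) < 1$. Invoking \Cref{pro:completeness} with $R = [0,1]_{\mathbb R} \in \mathbf{RMV_{\sigma DC}}$ then delivers $\not\vdash_{\mathcal{IR}\L}\varphi$, which is what we want.

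No serious obstacle arises. The only point needing mild care is the well-foundedness of the recursive definition of $e$, but this reduces to a routine induction on formula rank with $\sigma$-completeness of $[0,1]$ supplying the required suprema at every stage. Conceptually, the whole argument rests on the pleasant observation that the standard model of $\mathcal{R}$\L\ already belongs to the model class of $\mathcal{IR}\L$, so a counter-evaluation for $\varphi$ in $\mathcal{R}$\L\ automatically witnesses a counter-evaluation for $\varphi$ in $\mathcal{IR}\L$.
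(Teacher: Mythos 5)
Your proof is correct and follows essentially the same route as the paper's: both reduce the nontrivial direction to the observation that the standard model $[0,1]$ is a $\sigma$-complete Riesz MV-algebra, so that soundness of \NL\ over $\mathbf{RMV_{\sigma DC}}$ (\Cref{pro:completeness}) combined with standard completeness of $\mathcal{R}\L$ does the work. You phrase the argument contrapositively and make explicit the lifting of an $\mathcal{R}\L$-evaluation on a $\bigvee$-free formula to an \NL-evaluation \--- a detail the paper leaves implicit \--- but the approach is the same.
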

\begin{proof}
The fact that \NL\ is an extension of $\mathcal{R}\L$ is a consequence of \Cref{rem:sost-equiv}. To prove that it is a conservative extension, let $\varphi$ be a formula of $\mathcal{R}\L$  which is a theorem in \NL . By completeness, $\varphi $ is a tautology in the standard RMV-algebra $[0,1]$ which is a complete Riesz MV-algebra as well. Since $\varphi$ does not involves $\bigvee$, it is a tautology in $\mathcal{R}\L$, and by completeness of $\mathcal{R}\L$, $\varphi$ is a theorem in $\mathcal{R}\L$.
\end{proof}

\begin{remark}\label{rem:infOp}
The existence of free objects in complete boolean algebras is a known problem: when one deals with infinitary disjunctions and conjunctions, they may not always exist. One well behaved case is the case in which the algebra is, in addition, completely distributive. In the case of MV-algebras, it is mentioned in \cite[Chapter 11]{MunBook} that $\sigma$-complete MV-algebras satisfy the \emph{countable} distributivity law. Another well behaved case is the one of infinitary varieties. In such a case, each operation has infinite (but bounded) arity in contrast with the case of complete algebras, where the arity of the disjunctions and conjuctions depends on the cardinality of the algebra. \emph{Infinitary universal algebra} is developed in \cite{Slomiski} and it allows us to safely discuss free objects.
\end{remark}

\begin{proposition}\label{pro:IRLfree}
$IRL$ is the free algebra in $\mathbf{RMV_{\sigma DC}}$ generated over the set of equivalence classes of propositional variables. Analogously, $IRL_n$ is freely generated by $n$ propositional variables.
\end{proposition}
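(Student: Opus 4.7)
The plan is to verify the universal property of a free algebra in the infinitary variety $\mathbf{RMV_{\sigma DC}}$, in the sense of Słomiński (cf. \Cref{rem:infOp}). Let $V$ denote the countable set of propositional variables and $\iota\colon V\to IRL$, $p\mapsto [p]$, the canonical map. Given any $R\in \mathbf{RMV_{\sigma DC}}$ and any set map $f\colon V\to R$, we must produce a unique $\sigma$-complete Riesz MV-algebra morphism $\bar f\colon IRL\to R$ with $\bar f\circ \iota = f$.

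First I would define, by recursion on the construction of formulas, an evaluation $e_f\colon FORM_{IRL}\to R$ extending $f$, using the clauses already singled out before \Cref{pro:completeness}: $e_f(p)=f(p)$ for $p\in V$, $e_f(\neg\varphi)=e_f(\varphi)^{*}$, $e_f(\varphi\to\psi)=e_f(\varphi)^{*}\oplus e_f(\psi)$, $e_f(\Delta_\alpha\varphi)=\alpha\, e_f(\varphi)$, and $e_f(\bigvee_n \varphi_n)=\bigvee_n e_f(\varphi_n)$, where the last supremum exists because $R$ is $\sigma$-complete. This recursion is well-founded on the rank of formulas (each $\bigvee$ has a countable, hence well-ordered, index set of subformulas of strictly smaller rank).

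Next I would show that $e_f$ factors through the quotient $FORM_{IRL}/\!\equiv$, i.e., that $\varphi\equiv\psi$ implies $e_f(\varphi)=e_f(\psi)$. By \Cref{pro:completeness} (soundness), for every theorem of \NL\ the evaluation returns $1$ in any $R\in\mathbf{RMV_{\sigma DC}}$, and the same holds for the propositional calculus above each $e_f$ because the axioms \ref{L:item1}--\ref{L:item4}, \ref{R:item1}--\ref{R:item4} and \ref{S:item1} are $R$-tautologies and both Modus Ponens and (SUP) preserve $R$-tautologies. Thus $\vdash \varphi\to\psi$ and $\vdash\psi\to\varphi$ force $e_f(\varphi)=e_f(\psi)$, so the map $\bar f\colon IRL\to R$, $[\varphi]\mapsto e_f(\varphi)$, is well defined. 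By the very definition of the operations (1)--(6) on $IRL$, together with the fact that $\bar f(\bigvee_n [\varphi_n])=\bar f([\bigvee_n \varphi_n])=\bigvee_n e_f(\varphi_n)=\bigvee_n \bar f([\varphi_n])$, the map $\bar f$ is a $\sigma$-complete morphism of Riesz MV-algebras extending $f$.

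For uniqueness, suppose $g\colon IRL\to R$ is another such morphism with $g\circ\iota=f$. I would prove $g([\varphi])=\bar f([\varphi])$ by induction on the construction of $\varphi$: the base case is given by $g\circ\iota=f=\bar f\circ\iota$, the finitary connective steps use the fact that $g$ is a morphism of Riesz MV-algebras, and the $\bigvee$-step uses that $g$ is $\sigma$-preserving. This yields $g=\bar f$. The statement for $IRL_n$ follows verbatim by restricting $V$ to the $n$ chosen variables and $FORM_{IRL}$ to formulas built from them. The only delicate point, and the main obstacle to keep an eye on, is ensuring that the recursion defining $e_f$ and the induction proving uniqueness both respect the infinitary step; this is where the infinitary-variety viewpoint (as opposed to the purely ``complete'' one) matters, since it guarantees that each formula has a well-founded parse tree with countable branching.
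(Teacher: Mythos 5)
Your proposal is correct and follows essentially the same route as the paper's proof: both extend $f$ to the unique evaluation $e_f$ on $FORM_{IRL}$, use soundness to show $\equiv$ is contained in the kernel so that $e_f$ factors through the quotient as a $\sigma$-homomorphism, and derive uniqueness from the fact that a competing morphism composed with the canonical projection is again an evaluation agreeing with $e_f$ on the variables. The paper additionally frames this as an instance of S\l omi\'nski's general theorem on infinitary varieties, but the verification it carries out is the one you describe.
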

\begin{proof}
The result is a straightforward consequence of \cite[Chapter III, Theorem 8.3]{Slomiski}. With the aim of showing this standard technique and for the sake of completeness, we now give a detailed proof.

Let $X$ be the set of equivalence classes for the propositional variables of \NL , i.e. $X=\{ [v]\mid v\text{ is a propositional variable}\}$. Let $R$ be a  $\sigma$-complete Riesz MV-algebra and let $f: X \rightarrow R$ be a function. We need to  prove that there exists a unique $\sigma$-homomorphism of Riesz MV-algebras $f^{\sharp}:IRL\rightarrow R$ such that $f^{\sharp} ([v])=f([v])$ for any $v\in Var$.\\
With standard arguments \--- evaluations are uniquely determined by the values on variables, see \cite[Chapter 4.3]{Karp} for the Boolean case \--- one can show that there exists a unique evaluation $e$ such that $e(v)=f([v])$ for any $v\in Var$. If $\varphi $ and $\psi$ are formulas such that $\varphi \equiv \psi$, we have $e(\varphi)=e(\psi)$ and $\equiv \subset Ker(e)$. We recall that $\equiv$ is a congruence relation on $FORM_{IRL}$, and by the homomorphisms theorem \cite[Chapter II, Theorem 5.8]{Slomiski} there exists $f^{\sharp}: \quot{FORM_{IRL}}{\equiv} \rightarrow R$ such that $f^{\sharp}([\varphi])=e(\varphi)$ for any $\varphi \in FORM_{IRL}$. $f^{\sharp}$ is a $\sigma$-homomorphism  of Riesz MV-algebras, indeed:
\begin{enumerate}[label=(\roman*)]
\item $f^{\sharp}([\varphi]^*)=f^{\sharp} ([\neg \varphi])=e(\neg \varphi)=e(\varphi)^*=(f^{\sharp}([\varphi]))^*$;
\item $f^{\sharp}([\varphi]\oplus [\psi])= f^{\sharp}([\neg \varphi \rightarrow \psi])= e(\neg \varphi \rightarrow \psi)= e(\varphi)\oplus e(\psi)= f^{\sharp}([\varphi])\oplus f^{\sharp}([\psi])$;
\item $f^{\sharp}(\mathbf{1})=f^{\sharp}([\varrho])=e(\varrho)=1$, where $\varrho$ is a theorem of \NL ;
\item $ f^{\sharp}(\alpha[\varphi])=f^{\sharp}([\Delta_\alpha \varphi])= e(\Delta_\alpha \varphi)=\alpha e(\varphi)=\alpha f^{\sharp}([\varphi])$;
\item $f^{\sharp}(\bigvee_n[\varphi_n])=f^{\sharp} ([\bigvee_n \varphi_n])=e(\bigvee_n \varphi_n)=\bigvee_ne(\varphi_n)^*=\bigvee_nf^{\sharp}([\varphi_n])$. 
\end{enumerate}
It remains to prove the uniqueness of $f^{\sharp}$. Let $g: IRL\rightarrow R$ be another $\sigma$-homomorphism of Riesz MV-algebras such that $g([v])=f([v])$ for any $v\in Var$. Let $\pi$ be the canonical surjection $\pi: FORM_{IRL}\rightarrow FORM_{IRL}/\equiv$; it is obvious that $g \circ \pi $ is an evaluation such that $(g\circ \pi )(v)=e(v)$, then $g\circ \pi =e$ (as $e$ was the unique evaluation with this property) and $g([\varphi])=e(\varphi)$ for any $\varphi \in FORM_{IRL}$; since $f^{\sharp}$ was the unique extension with this property, $g=f^{\sharp}$.
\end{proof}

\begin{proposition}\label{pro:IRLisDsC}
$IRL_n$ is the  $\sigma$-completion of $RL_n$, where $RL_n$ denotes the Lindenbaum-Tarski algebra of $\mathcal{R}\L$ on $n$ propositional variables.
\end{proposition}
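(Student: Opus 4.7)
The plan is to characterize $IRL_n$ as the $\sigma$-completion of $RL_n$ via the universal property: exhibit an embedding $\iota \colon RL_n \hookrightarrow IRL_n$ such that every Riesz MV-algebra homomorphism from $RL_n$ into a $\sigma$-complete Riesz MV-algebra lifts uniquely to a $\sigma$-homomorphism out of $IRL_n$. The two freeness results already at our disposal \---\ namely that $RL_n$ is free in $\mathbf{RMV}$ on $n$ generators and that $IRL_n$ is free in $\mathbf{RMV_{\sigma DC}}$ on $n$ generators (\Cref{pro:IRLfree}) \---\ will do almost all of the work.

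First I would define $\iota \colon RL_n \to IRL_n$ by $\iota([\varphi]_{RL_n})=[\varphi]_{IRL_n}$ for every $\mathcal{R}\L$-formula $\varphi$ on $n$ variables. Both well-definedness and injectivity are consequences of \Cref{pro:consext}: for an $\mathcal{R}\L$-formula $\chi$, we have $\vdash_{\mathcal{R}\L} \chi$ if and only if $\vdash_{IRL} \chi$, so $[\varphi]_{RL_n}=[\psi]_{RL_n}$ iff $[\varphi]_{IRL_n}=[\psi]_{IRL_n}$. That $\iota$ is a homomorphism of Riesz MV-algebras is immediate, because the finitary operations on both quotients are defined by the same syntactic clauses.

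Next I would verify the universal property. Fix $B\in \mathbf{RMV_{\sigma DC}}$ and a Riesz MV-algebra homomorphism $h\colon RL_n \to B$; write $b_i := h([v_i]_{RL_n})$ for the images of the free generators. By \Cref{pro:IRLfree}, there exists a unique $\sigma$-homomorphism $\bar h \colon IRL_n \to B$ with $\bar h([v_i]_{IRL_n})=b_i$. The composite $\bar h \circ \iota \colon RL_n \to B$ is a Riesz MV-algebra homomorphism sending each $[v_i]_{RL_n}$ to $b_i$, so by the freeness of $RL_n$ in $\mathbf{RMV}$ it must coincide with $h$. Any other $\sigma$-extension of $h$ would likewise have to act as $b_i$ on the generators $[v_i]_{IRL_n}$ and would therefore equal $\bar h$ by the uniqueness clause in \Cref{pro:IRLfree}.

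The only mildly delicate step is the well-definedness (hence injectivity) of $\iota$, which rests squarely on the conservative-extension result \Cref{pro:consext}; the remainder is a routine diagram chase with two universal properties. As a byproduct one also obtains that $\iota(RL_n)$ $\sigma$-generates $IRL_n$: the $\sigma$-subalgebra $C$ generated by $\iota(RL_n)$ inherits the universal property just proved, so the inclusion $C\hookrightarrow IRL_n$ composed with the unique $\sigma$-extension $IRL_n\to C$ of $\iota$ equals the identity by uniqueness, forcing $C=IRL_n$.
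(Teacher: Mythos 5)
Your argument is correct and is essentially the proof in the paper: both define $\iota$ via \Cref{pro:consext} and then verify the universal property by playing the freeness of $IRL_n$ in $\mathbf{RMV_{\sigma DC}}$ (\Cref{pro:IRLfree}) against the freeness of $RL_n$ in $\mathbf{RMV}$. Your closing observations on injectivity, uniqueness of the extension, and $\sigma$-generation of $IRL_n$ by $\iota(RL_n)$ are slightly more explicit than the paper's write-up but add nothing structurally new.
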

\begin{proof}
By Proposition \ref{pro:consext}, $\iota: RL_n\hookrightarrow IRL_n$ and the embedding is defined by $[v_i]_{RL}\mapsto [v_i]_{IRL}$ for any propositional variable $v_i$, with $i=1, \dots ,n$.

We need to prove that, for any $f:RL_n\to R$, where $R$ is a  $\sigma$-complete Riesz MV-algebra, there exists a unique $g:IRL_n\to R$ such that $g\mid_{RL_n}=f$. 

From $f$ we can define a map $\overline{f}:\{[v_1]\dots , [v_n] \}\to R$ by $\overline{f}([v_i])=f([v_i]_{RL})$. By Proposition \ref{pro:IRLfree} there exists a unique $\sigma$-homomorphism of Riesz MV-algebras $f^{\sharp}:IRL_n\to R$ such that $f^{\sharp}([v_i])=f^{\sharp}(\iota([v_i]_{RL}))=f([v_i]_{RL})$. Being $RL_n$ freely generated by $[v_i]_{RL}$, $f^{\sharp}$ agrees with $f$ over $\iota(RL_n)$ and the claim is settled.
\end{proof}

Thus, $IRL_n$ is the  $\sigma$-completion of $RL_n$ \--- which is the algebra of piecewise linear functions with real coefficients \--- and $RL_n$ has $C([0,1]^n)$ \--- which is the algebra of $[0,1]$-valued continuous functions defined over $[0,1]^n$ \--- as its norm completion, see \cite[Theorem 2.15]{DiNLL-RMV}. Since norm completions, Dedekind $\sigma$-completions and Dekekind completions of Riesz Spaces are a deeply investigated subject, we shall see in the next subsection how this can be exploited in a logical setting. 

\begin{remark}
To obtain \NL \ we have started from the logic $\mathcal{R}\L$ of Riesz MV-algebras. To the same effect, we could have started from the \emph{Rational \L ukasiewicz logic} $\mathcal{Q}\L$ defined by B. Gerla in \cite{brunella}. This logical system stands in between \L ukasiewicz logic and $\mathcal{R}\L$: in \cite{LLDMV} the authors provided an axiomatization for $\mathcal{Q}\L$ that has the same axioms in \cref{R:item1,R:item2,R:item3,R:item4}, but scalars are taken in $[0,1]\cap \mathbb{Q}$ instead of $[0,1]$. Thus, starting with $\{ \nabla_q\mid q\in [0,1]\cap \mathbb{Q}\}$ and $\bigvee$, we obtain the same logical system by the following remark:
\[ \Delta_r\varphi =\bigvee\{ \Delta_q\varphi \mid r>q\in [0,1]\cap\mathbb{Q}\}, \]
where $\Delta_r\varphi=\neg \nabla_r \neg \varphi$.
\end{remark}

\section{Limits, compact Hausdorff spaces and completions}\label{sec:KHSpace}
In this section we investigate the models of \NL \ from a different point of view. Indeed, in Definition \ref{def:limit} we have defined a notion of \emph{logical order convergence} and in Proposition \ref{pro:limitProperties} we have proved that, in the Lindenbaum-Tarski algebra of \NL , the limit is unique and the MV-algebraic operations are continuous with respect to the limit. It was proved in \cite{DiNLL-RMV} that in $RL_n$, the Lindenbaum-Tarski algebra of the logic $\mathcal{R}\L$, Definition \ref{def:limit} can be strengthened in such way that it corresponds to the notion of \emph{uniform convergence}: if we denote by $f_{\psi}$ the function that corresponds (via isomorphism) to $[\psi]$ in $RL$, we have the following theorem.
\begin{theorem} \cite[Theorem 2.8]{DiNLL-RMV}\label{teo:limitREF}
 Let $\{\varphi_n\}_{n\in\mathbb{N}}\subseteq Form_{\mathcal{R}L}$ and $\varphi\in Form_{\mathcal{R}L}$. Then the following are equivalent:\\
(i)  $\{f_{\varphi_n}\}_{n\in\mathbb{N}}$ converges uniformly to $f_\varphi$,\\
(ii)  There exists an increasing sequence of real numbers $r_n$ such that $\bigvee r_n=1$ and $\vdash \eta_{r_n}\rightarrow (\varphi_n \leftrightarrow \varphi)$ for any $n\in \mathbb{N}$, where $\eta_{r_n}=\Delta_{r_n}\top$.
\end{theorem}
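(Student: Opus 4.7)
The plan is to pass through the standard semantics. Via the isomorphism between $RL_n$ and the algebra of piecewise linear functions with real coefficients on $[0,1]^n$, together with the standard completeness of $\mathcal{R}\L$, a provability statement $\vdash \sigma$ becomes the identity $f_\sigma \equiv 1$ on $[0,1]^n$. Under this dictionary, both conditions in the theorem turn into pointwise estimates on $|f_{\varphi_n}-f_\varphi|$, and their equivalence reduces to the elementary fact that a sequence of reals tends to $0$ if and only if it is dominated by a decreasing null sequence.

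The main calculation is to unfold $\eta_{r_n}\to(\varphi_n\leftrightarrow\varphi)$ in the standard algebra $[0,1]_{\mathbb R}$. Since $\Delta_\alpha$ is scalar multiplication, $\eta_{r_n}=\Delta_{r_n}\top$ is the constant function $r_n$; the \L ukasiewicz biconditional evaluates as $x\leftrightarrow y=1-|x-y|$; therefore $(\eta_{r_n}\to(\varphi_n\leftrightarrow\varphi))(x)=\min(1,\,2-r_n-|f_{\varphi_n}(x)-f_\varphi(x)|)$. This equals $1$ at $x$ precisely when $|f_{\varphi_n}(x)-f_\varphi(x)|\le 1-r_n$, so by standard completeness of $\mathcal{R}\L$ the hypothesis $\vdash\eta_{r_n}\to(\varphi_n\leftrightarrow\varphi)$ is equivalent to the uniform bound $\|f_{\varphi_n}-f_\varphi\|_\infty\le 1-r_n$.

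With this dictionary each direction is immediate. For (ii) $\Rightarrow$ (i), an increasing sequence $r_n\in[0,1]$ with $\bigvee_n r_n=1$ gives $\|f_{\varphi_n}-f_\varphi\|_\infty\le 1-r_n\to 0$, i.e.\ uniform convergence. For (i) $\Rightarrow$ (ii), I would set $s_n:=\|f_{\varphi_n}-f_\varphi\|_\infty$; then $t_n:=\sup_{m\ge n}s_m$ is a decreasing null sequence in $[0,1]$, and $r_n:=1-t_n$ is an increasing real sequence with $\bigvee_n r_n=1$ satisfying $|f_{\varphi_n}(x)-f_\varphi(x)|\le s_n\le t_n=1-r_n$ for every $x$, which by the above translation yields $\vdash\eta_{r_n}\to(\varphi_n\leftrightarrow\varphi)$.

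I do not anticipate any real obstacle: the theorem is essentially the semantic tautology ``uniform convergence is domination by a null sequence'' dressed in logical clothing. The one technicality is enforcing monotonicity of $r_n$ in the forward direction, for which the $\sup_{m\ge n}$ trick is the standard device; the requirement $r_n\in[0,1]$ (so that $\Delta_{r_n}$ is a legitimate connective) is automatic since each $s_n\in[0,1]$.
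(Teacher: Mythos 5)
Your proof is correct, and there is nothing in this paper to compare it against: the statement is imported verbatim from \cite{DiNLL-RMV} (Theorem 2.8) without a proof here. Your route is the expected one — since $\eta_r$ denotes the constant function $r$ and $x\leftrightarrow y$ evaluates to $1-|x-y|$, standard completeness of the finitary logic $\mathcal{R}\L$ turns $\vdash \eta_{r_n}\rightarrow(\varphi_n\leftrightarrow\varphi)$ into exactly the unit-norm bound $\|f_{\varphi_n}-f_\varphi\|_u\le 1-r_n$, and your $\sup_{m\ge n}$ regularization correctly supplies the required monotone sequence $r_n$ with $\bigvee_n r_n=1$.
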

Such a definition is very similar to \Cref{def:limit}, but it is stronger, as it requires a peculiar sequence of formulas, the point being that formulas of the shape $\eta_r$ correspond to constant functions in $RL$. The difference arises from the remark that in a space of functions, the pointwise infimum (or supremum) of a sequence of formulas does not need to coincide with the infimum (or supremum) of the sequence, while this distinction is immaterial in the case of constant functions. More details on this subtle difference can be found in \cite[Remark 2.9]{DiNLL-RMV}.

Thus, it is natural to ask how having an infinitary disjunction affects uniform convergence. The following proposition shows that the infinitary disjunction forces our models to be norm-complete. To do so, let us first recall that on semisimple Riesz MV-algebras the unit seminorm is actually a norm and that the full subcategory of norm-complete Riesz MV-algebras is equivalent to the one of \emph{$M$-spaces}, where an $M$-space is, via Kakutani's duality \cite{kakutani}, a norm-complete Riesz space that is isomorphic with $C(X)=\{f\colon X\to \mathbb{R}\mid f \text{ continuous}\}$, for a  compact Hausdorff space $X$.
\begin{proposition}\label{pro:dcAREnc}
Let $R$ be a Riesz MV-algebra. If $R$ is  $\sigma$-complete, then it is norm-complete with respect to the norm induced by the unit, that is $\lVert x \rVert_u=\min \{ \alpha \in [0,1]\mid x\le \alpha 1_R \}$.  As a consequence, $IRL$ is a norm-complete Riesz MV-algebra.
\end{proposition}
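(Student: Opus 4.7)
The plan is to transfer the question to the Riesz-space side via \Cref{Gamma-DedCompl}, use the classical fact that a Dedekind $\sigma$-complete Archimedean Riesz space with strong unit is a Banach lattice in the unit norm, and then conclude that the order interval $[0,u]$ inherits completeness as a closed subset.

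First I would observe that $R$ is semisimple: any $\sigma$-complete MV-algebra is Archimedean (its radical vanishes), and a Riesz MV-algebra is semisimple exactly when its MV-reduct is. Therefore $\|\cdot\|_u$ is genuinely a norm on $R$ and $\rho_{\|\cdot\|_u}$ is a metric. Let $(V,u)$ be the Riesz space with strong unit such that $\Gamma_\mathbb{R}(V,u)=R$, so $R=[0,u]_V$. By \Cref{Gamma-DedCompl}, $V$ is Dedekind $\sigma$-complete, and the unit seminorm $\|x\|_u=\inf\{\lambda>0\mid |x|\le\lambda u\}$ on $V$ restricts to the Riesz MV-algebra unit seminorm on $R$. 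Thus it suffices to prove $(V,\|\cdot\|_u)$ is a Banach lattice.

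Next, let $(x_n)\subset V$ be Cauchy. Passing to a subsequence, I may assume $|x_{n+1}-x_n|\le 2^{-n}u$, so that a telescoping estimate yields $|x_k-x_n|\le 2^{-(n-1)}u$ for every $k\ge n$. All the $x_n$ lie in the order-bounded interval $[x_1-u,\,x_1+u]$, so by $\sigma$-completeness of $V$ I may form
\[
\underline{x}=\bigvee_{n\in\mathbb{N}}\bigwedge_{k\ge n}x_k,\qquad \overline{x}=\bigwedge_{n\in\mathbb{N}}\bigvee_{k\ge n}x_k.
\]
Applying $\bigwedge_{k\ge n}$ and $\bigvee_{k\ge n}$ to the telescoping bound produces
\[
x_n-2^{-(n-1)}u\;\le\;\underline{x}\;\le\;\overline{x}\;\le\;x_n+2^{-(n-1)}u,
\]
so $|\overline{x}-\underline{x}|\le 2^{-(n-2)}u$ for every $n$, and Archimedeanity forces $\overline{x}=\underline{x}=:x$. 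The same inequalities give $|x-x_n|\le 2^{-(n-1)}u$, hence $x_n\to x$ in $\|\cdot\|_u$; since the original sequence was Cauchy, it too converges to $x$.

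Finally, $R=[0,u]_V$ is a closed order interval in the Banach lattice $V$ (closedness of order intervals under the unit norm is immediate from the definition of $\|\cdot\|_u$), so it is complete in the restricted metric, proving $R$ is norm-complete. The second assertion is immediate: $IRL$ has already been shown to be a $\sigma$-complete Riesz MV-algebra, so the general statement applies. The main obstacle is the third-paragraph manipulation, where the telescoping bound must be pushed through countable suprema and infima and then combined with Archimedeanity; this is routine Riesz-space bookkeeping but is the only nontrivial calculation.
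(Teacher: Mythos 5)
Your proof is correct and takes essentially the same route as the paper: both transfer the problem to the associated Riesz space with strong unit, which is Dedekind $\sigma$-complete by \Cref{Gamma-DedCompl} (via \cite[Theorem 16.9]{goodearl}), and then invoke the fact that a Dedekind $\sigma$-complete Riesz space with strong unit is norm-complete in the unit norm. The only difference is that the paper cites this last step (\cite[Theorem 45.4]{RS} together with \cite[Theorem 7]{LeuRMV}), whereas you reprove it by the standard Cauchy-subsequence/limsup--liminf argument and then recover completeness of $R=[0,u]$ from closedness of the order interval --- a harmless, self-contained variant of the same idea.
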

\begin{proof}
We first remark that any Dedekind $\sigma$-complete Riesz Space with a strong unit is an M-space (see \cite[Theorem 45.4]{RS} and subsequent discussion). It is known that an MV-algebra is $\sigma$-complete if, and only if, the corresponding $\ell u$-group is Dedekind $\sigma$-complete, as proved in \cite[Proposition 16.9]{goodearl}. Since the lattice reduct of a Riesz MV-algebra is the same of its MV-algebra reduct, we can say that a Riesz MV-algebra $R$ is $\sigma$-complete if, and only if,  the corresponding Riesz Space $(V_R, u)$ is Dedekind  $\sigma$-complete. By \cite[Theorem 7]{LeuRMV}, $(V_R,u)$ is an M-space if and only if $R$ is norm-complete, and the claim is settled.
\end{proof}

In \cite{DiNLL-RMV} we have discussed the norm-completion of $RL_n$ with respect to the unit norm above mentioned. The previous proposition linked closely together norm-completion and  $\sigma$-completion of $RL_n$, indeed we have the following.

\begin{proposition}\label{contsub}
For any $n\in \mathbb{N}$, denoted by $IRL_n$ the Lindenbaum-Tarski algebra built upon formulas with at most $n$ variables and up to isomorphism, $C([0,1]^n)\leq IRL_n$.
\end{proposition}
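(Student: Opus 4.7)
The plan is to assemble three ingredients already available in the excerpt: (a) $RL_n$ embeds into $IRL_n$ as its $\sigma$-completion (Proposition~\ref{pro:IRLisDsC}); (b) $IRL_n$ is norm-complete with respect to the unit norm $\lVert\cdot\rVert_u$ (Proposition~\ref{pro:dcAREnc}); and (c) $C([0,1]^n)$ is, up to isomorphism, the norm-completion of $RL_n$ (cited from \cite[Theorem 2.15]{DiNLL-RMV}). Combining these, $IRL_n$ will be shown to contain an isomorphic copy of the norm-completion of $RL_n$, hence of $C([0,1]^n)$.

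First I would make precise the ambient semisimplicity: since $RL_n$ is the free $n$-generated Riesz MV-algebra, realised as piecewise linear $[0,1]$-valued functions on $[0,1]^n$, it is semisimple, and so is $IRL_n$ because the latter is subdirectly representable via its evaluations into $[0,1]$ (or, more directly, because $IRL_n$ arises as a $\sigma$-complete algebra of Borel-type functions per the abstract). Therefore $\lVert\cdot\rVert_u$ is a genuine norm on both $RL_n$ and $IRL_n$, and the embedding $\iota\colon RL_n\hookrightarrow IRL_n$ from Proposition~\ref{pro:IRLisDsC} is an isometry, because $\iota$ is a unital homomorphism of Riesz MV-algebras and the unit norm $\lVert x\rVert_u=\min\{\alpha\in[0,1]\mid x\le\alpha 1\}$ is defined purely in terms of the unit and the order, both of which $\iota$ preserves.

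Next, let $\overline{\iota(RL_n)}$ denote the $\lVert\cdot\rVert_u$-closure of $\iota(RL_n)$ inside $IRL_n$. By Proposition~\ref{pro:dcAREnc} the ambient algebra $IRL_n$ is norm-complete, so $\overline{\iota(RL_n)}$ is itself a norm-complete Riesz MV-algebra (the Riesz MV-operations, including the scalar multiplications $\Delta_\alpha$, are $1$-Lipschitz with respect to $\lVert\cdot\rVert_u$, hence extend continuously to the closure). Thus $\overline{\iota(RL_n)}$ is a norm-complete Riesz MV-algebra in which $RL_n$ sits as a dense subalgebra, which is exactly the universal property of the norm-completion of $RL_n$. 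By the uniqueness part of that universal property, $\overline{\iota(RL_n)}\cong C([0,1]^n)$ as Riesz MV-algebras, and therefore $C([0,1]^n)$ embeds into $IRL_n$ via the map $f\mapsto \lim_n \iota(f_n)$ (computed in $IRL_n$) for any sequence $(f_n)\subseteq RL_n$ converging uniformly to $f$.

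The main obstacle is the verification that $\iota$ is an isometry, since this is what lets us transfer the uniform-completion of $RL_n$ inside the ambient $IRL_n$. Once semisimplicity of $IRL_n$ is in hand, the argument is routine: $\lVert\iota(x)\rVert_u\le\alpha$ iff $\iota(x)\le\alpha\cdot 1_{IRL_n}=\iota(\alpha\cdot 1_{RL_n})$ iff (since $\iota$ is an order-embedding) $x\le\alpha\cdot 1_{RL_n}$, iff $\lVert x\rVert_u\le\alpha$. After that, the extension of the embedding to $C([0,1]^n)$ and the identification of the closure with $C([0,1]^n)$ both rest on the previously cited norm-completion description of $RL_n$, so no further hard work is required.
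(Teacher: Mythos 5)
Your proposal is correct and follows essentially the same route as the paper: both rest on Proposition~\ref{pro:dcAREnc} (norm-completeness of $IRL_n$) together with the fact from \cite{DiNLL-RMV} that $C([0,1]^n)$ is the norm-completion of $RL_n$, concluding that $IRL_n$ must contain that completion. The only difference is that you spell out the details the paper leaves implicit (semisimplicity, the isometry of the embedding $RL_n\hookrightarrow IRL_n$, and uniqueness of the norm-completion), which is a reasonable elaboration rather than a different argument.
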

\begin{proof}
By Proposition \ref{pro:dcAREnc}, $IRL_n$ is a norm-complete Riesz MV-algebra. The unit norm of continuous functions coincides with the sup-norm and by \cite[Theorem 3.2]{DiNLL-RMV} the norm-completion of $(RL_n, \lVert \cdot \rVert_u)$ is $(C([0,1]^n), \lVert \cdot \rVert_\infty)$. Thus, since $RL_n\subseteq IRL_n$, $IRL_n$ must contain the norm-completion of $RL_n$.
\end{proof}

\begin{remark} The converse of Proposition \ref{pro:dcAREnc} does not hold. Indeed there are M-spaces that are not Dedekind $\sigma$-complete. An  example is $C([0,1])$, see \cite[Example 23.3(ii)]{RS}. Thus, one can see that $C([0,1]^n)$ is not Dedekind $\sigma$-complete.
\end{remark}

Finally, we recall the following theorem.
\begin{theorem}\cite[Corollary 5]{LeuRMV}\label{teo:dualRMVKHaus}
The categories $\mathbf{URMV}$ of norm complete Riesz MV-algebras (full subcategory of $\mathbf{RMV}$) and $\mathbf{KHaus}$ of compact Hausdorff spaces and continuous maps are dually equivalent. 
\end{theorem}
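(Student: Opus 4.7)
The plan is to derive this duality by composing two known equivalences: the functor $\Gamma_\mathbb{R}$ recalled in \Cref{sec:prelimRMV}, which identifies $\mathbf{RMV}$ with the category $\mathbf{RS_u}$ of Riesz spaces with strong unit, and Kakutani's classical duality \cite{kakutani} between unital M-spaces and compact Hausdorff spaces.

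First, I would show that $\Gamma_\mathbb{R}$ restricts to a categorical equivalence between $\mathbf{URMV}$ and the full subcategory $\mathbf{M}\subseteq\mathbf{RS_u}$ of unital M-spaces. If $(V,u)$ is a unital M-space, then the unit seminorm on $\Gamma_\mathbb{R}(V,u)=[0,u]_V$ agrees with the restriction of the M-norm, so $[0,u]_V$ is norm-complete as a metric space and hence lies in $\mathbf{URMV}$. Conversely, starting from $R\in\mathbf{URMV}$, semisimplicity ensures $\|\cdot\|_u$ is a genuine norm on $R$, and one extends it to the enveloping Riesz space $V_R$ via the good-sequences description of \cite{BGL} used in the proof of \Cref{Gamma-DedCompl}; a direct check shows this extension is an M-norm with respect to which $V_R$ is Banach, so $(V_R,u)\in\mathbf{M}$. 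Functoriality on both sides follows because morphisms in $\mathbf{URMV}$ are automatically non-expansive with respect to $\|\cdot\|_u$.

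Next, I would apply Kakutani's theorem, which provides the dual equivalence $\mathbf{M}\simeq\mathbf{KHaus}^{op}$ via $V\mapsto\mathrm{Max}(V)$ (the maximal ideal space with the hull-kernel topology) and $X\mapsto C(X,\mathbb{R})$ (pointwise operations, sup-norm). Composing with the equivalence of Step 1, I obtain the desired pair of functors: $R\mapsto\mathrm{Max}(R)$, and in the reverse direction $X\mapsto C(X,[0,1])$ regarded as a Riesz MV-algebra under pointwise operations, which is norm-complete because $C(X,\mathbb{R})$ is Banach.

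The principal obstacle is, as usual for Kakutani-type theorems, verifying the two natural transformations are isomorphisms. On the topological side, one checks that $X\to\mathrm{Max}(C(X,[0,1]))$, $x\mapsto\mathfrak{m}_x:=\{f:f(x)=0\}$, is a homeomorphism: injectivity and the continuity match use Urysohn's lemma (which applies since $X$ is compact Hausdorff, hence normal), and surjectivity uses compactness to force every maximal ideal to have a common zero. On the algebraic side, one shows the Gelfand-style evaluation $R\to C(\mathrm{Max}(R),[0,1])$, $x\mapsto\widehat{x}$ with $\widehat{x}(\mathfrak{m})=x/\mathfrak{m}$, is an isomorphism; this is the harder half, as it requires a Stone--Weierstrass argument to show the image is a point-separating sub-Riesz-MV-algebra and therefore sup-norm dense, at which point the norm-completeness hypothesis on $R$ promotes density to equality. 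Once these are in place, naturality and the identification of morphisms reduce to routine diagram chases.
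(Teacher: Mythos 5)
Your proposal is essentially the paper's route: the paper gives no proof of its own but quotes this as \cite[Corollary 5]{LeuRMV}, where the duality is obtained exactly as you describe, by restricting $\Gamma_\mathbb{R}$ to an equivalence between norm-complete Riesz MV-algebras and unital M-spaces (via the unit norm and its extension to the enveloping Riesz space) and then composing with Kakutani's duality between M-spaces and $\mathbf{KHaus}$. Your sketch of the unit/counit verifications (Urysohn plus compactness on the spatial side, evaluation plus Stone--Weierstrass density promoted to equality by norm-completeness on the algebraic side) matches the argument of the cited source, so there is nothing to correct.
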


Whence, building on Theorem \ref{teo:dualRMVKHaus} and Proposition \ref{pro:dcAREnc} we can say that, for any model $R$ of \NL, there exists a compact Hausdorff space $K$ such that $R\simeq C(K)$, the unit interval of the algebra of continuous functions defined over $K$. Since the models of the logic \NL \ have the extra property of being $\sigma$-complete, we can say something more, as proved in the following theorem.\\

Let $X$ be a topological space. We recall that a subset of $X$ is said to be $F_{\sigma}$ if it is the union of countably many closed subsets of $X$. Moreover, $X$ is called \emph{basically disconnected} if the closure of every open $F_{\sigma}$ subset is open. In some books and papers, basically disconnected spaces are also called \emph{quasi-Stonean}. We also recall that a compact Hausdorff space is basically disconnected if, and only if, it is a totally disconnected space in which the union of any countable family of clopens is clopen. Whence, any basically disconnected compact Hausdorff space is a Stone space. 

\begin{theorem} \label{teo:quasiStonean}
A norm-complete Riesz MV-algebra is $\sigma$-complete if, and only if, the corresponding compact Hausdorff space is basically disconnected.
\end{theorem}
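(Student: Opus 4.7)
The plan is to reduce the claim to the classical Stone–Nakano theorem characterizing basic disconnectedness of a compact Hausdorff space $X$ by Dedekind $\sigma$-completeness of $C(X,\mathbb{R})$. The reduction chains together the equivalences already established in the preliminaries.

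First, I would move from the Riesz MV-algebra side to the Riesz space side. By \Cref{Gamma-DedCompl}, the functor $\Gamma_{\mathbb{R}}$ restricts to an equivalence on Dedekind $\sigma$-complete objects; equivalently, a Riesz MV-algebra $R$ is $\sigma$-complete if and only if the corresponding unital Riesz space $(V_R,u)$ with $R=\Gamma_{\mathbb{R}}(V_R,u)=[0,u]$ is Dedekind $\sigma$-complete. This step is essentially lattice-theoretic, since $\sigma$-completeness depends only on the lattice reduct.

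Next, I would use the norm-completeness assumption to identify $(V_R,u)$ with a space of continuous real-valued functions. By \cite[Theorem 7]{LeuRMV}, $R$ being norm-complete is equivalent to $(V_R,u)$ being an M-space; and M-spaces are, by Kakutani's duality (\Cref{teo:dualRMVKHaus}, read on the Riesz space side), precisely the spaces $C(X,\mathbb{R})$ for $X$ compact Hausdorff. Moreover, this $X$ coincides with the compact Hausdorff space dual to $R$ in \Cref{teo:dualRMVKHaus}, namely $X\cong \mathrm{Max}(R)$.

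Finally, I would apply the classical result (cf.\ \cite[Theorem 43.11 and \S 45]{RS}) that for a compact Hausdorff space $X$, the Riesz space $C(X,\mathbb{R})$ is Dedekind $\sigma$-complete if and only if $X$ is basically disconnected. The nontrivial direction usually goes as follows: given an open $F_\sigma$ set $U=\bigcup_n F_n$ with $F_n$ closed, one produces, via Urysohn, continuous functions $f_n\uparrow \chi_U$ pointwise on a dense set; the supremum in $C(X,\mathbb{R})$, which exists by $\sigma$-completeness, must be the characteristic function of $\overline{U}$, forcing $\overline{U}$ to be clopen. Conversely, on a basically disconnected space one shows that the pointwise supremum of a countable bounded family of continuous functions is again continuous, by covering $X$ with clopens on which the supremum is constant up to arbitrarily small error. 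Chaining the three equivalences yields the theorem. The main obstacle is bookkeeping the transfers among MV-algebraic $\sigma$-completeness, lattice $\sigma$-completeness, and Dedekind $\sigma$-completeness of the corresponding Riesz space; once those are aligned the result follows from the well-known topological characterization.
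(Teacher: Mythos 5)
Your proposal is correct and follows essentially the same route as the paper: the paper's proof simply cites Kakutani-type duality for norm-complete Riesz MV-algebras (\cite[Corollary 5]{LeuRMV}, i.e.\ \Cref{teo:dualRMVKHaus}) together with the classical characterization that $C(X)$ is Dedekind $\sigma$-complete exactly when $X$ is basically disconnected (\cite[Proposition 2.1.5]{banachlattices}), which is the same chain you build, with your use of \Cref{Gamma-DedCompl} and your sketch of the Stone--Nakano argument merely making explicit the bookkeeping and the cited classical fact.
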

\begin{proof}
It is straightforward by \cite[Proposition 2.1.5]{banachlattices} and \cite[Corollary 5]{LeuRMV}.
\end{proof}

We conclude this section with three remarks. \Cref{pro:dcAREnc} and \Cref{teo:quasiStonean} entail an important property: it is possible to obtain a functional representation for the algebra $IRL_n$ by continuous functions. Indeed, by the previous argument and Kakutani's duality, it must exist a basically disconnected space $K$ such that the $\sigma$-complete algebra $IRL_n$ is isomorphic with $C(K)$. Although this is an important characterization, we aim at a characterization of $IRL_n$ by (non-necessarily continuous) functions defined on $[0,1]^n$. This is the goal of the final part of this paper. Moreover, as a corollary of \Cref{teo:quasiStonean} and \Cref{pro:completeness} we can say that the logic \NL \ is complete with respect to models of the type $C(X)$, where $X$ is a basically disconnected compact Hausdorff space.

Finally, we recall a different approach to the idea of finding a logic whose models are closely related to compact Hausdorff spaces. To do so, we set the following: for any sequence $\varphi_1, \varphi_2,\ldots$ we define the sequence $\sigma_1=\Delta_{\frac{1}{2}}\varphi_1$, $\sigma_2=\Delta_{\frac{1}{2}}\varphi_1\oplus \Delta_{\frac{1}{2^2}}\varphi_2,\ldots$ and 
we set $\mathbf{\delta}(\varphi_1,\varphi_2,\cdots)=lim_n \sigma_n$.  

The operator denoted by $\delta$ has been introduced in a much more general way in \cite{EnzoReggio} and MV-algebras endowed with a $\delta$-operator are in categorical duality with compact Hausdorff spaces. In particular, the main result from  \cite{EnzoReggio} proves that the following categories are equivalent:\\
(CK) the dual of the category of compact Hausdorff spaces with continuous
maps;\\
(CD) the category of $\delta$-algebras with $\delta$-preserving MV-algebra morphisms. \\
Thus, the category of $\delta$-algebras is equivalent to the category  of norm-complete Riesz MV-algebras with  homomorphisms of Riesz MV-algebras.

\begin{remark}[From $\sigma$-complete to complete algebras]\label{rem:dedcomplete}
A natural question to ask is what happens if we allow arbitrary disjunctions: in this case, our class of interest will be the one of complete Riesz MV-algebras. Following \cite[Theorem 9.28]{Alip} and  \cite[Lemma 1.1]{kakutani} we get that the algebra is again (isomorphic with) the unit interval of an M-space. By \cite[Proposition 2.1.4]{banachlattices}, the compact space associated to such algebra is an extremely disconnected compact Hausdorff space. Finally,  we can push the logical setting a little further defining a disjunction of arity $\kappa$, where $\kappa$ is a fixed cardinal. The main complication in the case of complete Riesz MV-algebras is that we will lose the power of the approach of \cite{Slomiski} for infinitary varieties: in this case we would deal with disjunctions that depend on the cardinality of the algebra, and therefore they do not constitute a proper class of infinitary operations in the sense of \cite{Slomiski}.
\end{remark}

\section{The Loomis-Sikorski theorem, standard completeness and concrete representation}\label{sec:loomis}
In this section we prove the analogue of the Loomis-Sikorski theorem   for Riesz MV-algebras, which allows us to prove the standard completeness of \NL . We also represent the Lindenbaum-Tarski algebra in $n$ variables as an algebra of $[0,1]$-valued functions defined over $[0,1]^n$.

\begin{definition}
A \emph{Riesz tribe} over $X$ is a Riesz MV-algebra of $[0,1]$-valued functions over $X$ that is closed under \emph{pointwise} countable suprema. We remark that 
any Riesz tribe is a  $\sigma$-complete Riesz MV-algebra.
\end{definition}
Let $\mathcal{T}$ be a Riesz tribe and let $A$ be a  $\sigma$-complete Riesz MV-algebra. A map $\eta\colon \mathcal{T}\to A$ is a $\sigma$-homomorphism if, and only if, for each sequence $\{f_1, f_2, \dots, f_k, \dots \}\subseteq \mathcal{T}$, if $\sup_k f_k$ is the pointwise suprema of the sequence, then $\eta(\sup_k f_k)=\bigvee_k \eta(f_k)$.

Finally, let $X$ be a topological space. A subset $Z$ of $X$ is said to be \emph{meager} if it is the union of countably many subsets of $X$ whose closure has empty interior. Any two functions $f,g\in[0,1]^X$ are said to be \emph{essentially equal}, in symbols $f\approx g$, if $\{ x\in X \mid f(x)\neq g(x)\}$ is meager.

We recall that such notions have been defined in the theory of MV-algebras in \cite{Dvu-LS, Mun} while a more recent account can be found in \cite[Chapter 11]{MunBook}. Moreover, we recall that any  $\sigma$-complete Riesz MV-algebra $A$ is semisimple and we shall identify it with a subalgebra of $C(Max(A))$, where $Max(A)$ is the space of maximal ideals of $A$, see for example \cite{CDM} for the case of MV-algebras and recall that a Riesz MV-algebra is semisimple if, and only if, its MV-algebra reduct is semisimple.

\begin{theorem}[The Loomis-Sikorski Theorem for Riesz MV-algebras]\label{teo:LS}
Let $A\subseteq C(X)$ be a $\sigma$-complete Riesz MV-algebra, where $X=Max(A)$, and let $\mathcal{T}\subseteq [0,1]^X$ be the set of functions $f$ that are essentially equal to some function of $A$. Then $\mathcal{T}$ is a Riesz tribe,  each $f\in \mathcal{T}$ is essentially equal to a unique $f^*\in A$ and the map $f\mapsto f^*$ is a $\sigma$-homomorphism of $\mathcal{T}$ onto $A$.
\end{theorem}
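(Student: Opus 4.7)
The plan is to bootstrap from the Loomis-Sikorski theorem for MV-algebras, as established in \cite{Dvu-LS,Mun} (see also \cite[Chapter 11]{MunBook}), applied to the MV-reduct of $A$, and then to verify that the additional structure of a Riesz MV-algebra, namely the scalar multiplications $\{\alpha\cdot(-)\}_{\alpha\in[0,1]}$, carries through automatically. By \Cref{pro:dcAREnc}, $A$ is norm-complete, and \Cref{teo:quasiStonean} then forces $X=Max(A)$ to be a basically disconnected compact Hausdorff space; in particular $X$ is a Baire space, so every meager subset has empty interior and its complement is dense in $X$.

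First I would invoke the MV-algebra Loomis-Sikorski theorem to obtain: (a) $\mathcal{T}$ is closed under the MV-operations and under pointwise countable suprema, hence its MV-reduct is a tribe; (b) for each $f\in\mathcal{T}$ the witness $f^*\in A$ is uniquely determined, since two elements of $A\subseteq C(X)$ that agree off a meager set agree on a dense subset of $X$ and therefore coincide by continuity; (c) the assignment $f\mapsto f^*$ is a $\sigma$-homomorphism of MV-algebras from $\mathcal{T}$ onto $A$. The substantive content behind (c) is the classical observation that the abstract supremum $\bigvee_n a_n$ computed in the $\sigma$-complete algebra $A$ differs from the pointwise supremum only on a meager subset of $X$, which is exactly where basic disconnectedness is used.

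To upgrade the statement to the Riesz setting, it suffices to handle scalar multiplication. If $f\in\mathcal{T}$ and $f\approx g$ for some $g\in A$, then for any $\alpha\in[0,1]$ one has $\{x\in X:(\alpha f)(x)\neq(\alpha g)(x)\}\subseteq\{x\in X:f(x)\neq g(x)\}$, which is meager; since $\alpha g\in A$, we conclude $\alpha f\in\mathcal{T}$, so $\mathcal{T}$ is a Riesz tribe. By the uniqueness in (b), $(\alpha f)^*=\alpha f^*=\alpha(f^*)$, so the map $f\mapsto f^*$ preserves scalar multiplication and is therefore a $\sigma$-homomorphism of Riesz MV-algebras. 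Surjectivity is trivial since each $a\in A$ is essentially equal to itself.

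The main obstacle is thus not created but inherited: the entire analytic core of the argument is the classical lemma identifying, modulo a meager set, the order-theoretic supremum in $A\subseteq C(X)$ with the pointwise supremum of continuous functions. Once this is granted from the MV-algebra case, the Riesz extension contributes no genuinely new difficulty, because scalar multiplication is a pointwise unary operation and both the tribe $\mathcal{T}$ and the quotient map are compatible with any such operation via the trivial inclusion of exceptional sets observed above.
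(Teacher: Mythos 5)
Your proposal is correct and takes essentially the same route as the paper: both deduce the statement from the MV-algebraic Loomis--Sikorski theorem applied to the MV-reduct of $A$ and then only verify the scalar structure, with closure of $\mathcal{T}$ under $\alpha\cdot(-)$ obtained from the comparison of exceptional sets (your inclusion argument; the paper notes equality of these sets for $\alpha>0$). The single minor divergence is that you establish $(\alpha f)^{*}=\alpha f^{*}$ directly from the uniqueness of the witness in $A$, whereas the paper instead invokes the fact that MV-homomorphisms between semisimple Riesz MV-algebras are automatically Riesz MV-homomorphisms; both justifications are valid.
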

\begin{proof}
The proof is easily deduced from the MV-algebraic version of the Loomis-Sikorski theorem, \cite[Theorem 11.7]{MunBook}, applied to the MV-algebra reduct of $A$. 
We only need to check that $\mathcal{T}$ is closed under the scalar operation and that the map $\eta: f\mapsto f^*$ is an homomorphism of Riesz MV-algebras. The latter claim follows from the fact that MV-homomorphisms between semisimple Riesz MV-algebras are Riesz MV-homomorphisms, while the former is easily deduced by computation. Indeed, if $f\in \mathcal{T}$ there exists $f^* \in A$ such that $f\approx f^*$, which means $\{ x\in X \mid f(x)\neq f^*(x)\}$ is meager. By the properties of the scalar products, for any $\alpha>0$, $f(x)\neq f^*(x)$ if, and only if, $\alpha f(x)\neq \alpha f^*(x)$, thus $\{ x\in X \mid \alpha f(x)\neq \alpha f^*(x)\}=\{ x\in X \mid f(x)\neq f^*(x)\}$ and therefore is meager. Being $A$ a Riesz MV-algebra, $\alpha f^*\in A$ and $\alpha f\in \mathcal{T}$.
\end{proof}

In the following we make use of the infinitary version of Birkhoff's theorem \cite[Chapter III, Theorem 9.4]{Slomiski} and we prove that the class of  $\sigma$-complete Riesz MV-algebras is an infinitary  variety. 

\begin{theorem}\label{inf-hsp}
 $\mathbf{RMV_{\sigma DC}}=HSP([0,1])$, where $[0,1]$ has the natural structure of $\sigma$-complete Riesz MV-algebra. 
\end{theorem}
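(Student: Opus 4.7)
The plan is to establish both inclusions, with Loomis--Sikorski (\Cref{teo:LS}) doing the main work for the nontrivial direction.

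The inclusion $HSP([0,1]) \subseteq \mathbf{RMV_{\sigma DC}}$ is the easy half. The standard $[0,1]$ is itself a $\sigma$-complete Riesz MV-algebra. In the infinitary setting of \cite{Slomiski}, any product $[0,1]^X$ inherits pointwise operations, and since suprema in $[0,1]$ are pointwise, the countable-suprema operation in $[0,1]^X$ really is the supremum in the algebraic structure; closure under $S$ (subalgebras that respect the $\sigma$-operation) and $H$ ($\sigma$-preserving homomorphic images) preserves this, so the class is closed under $HSP$.

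For the reverse inclusion $\mathbf{RMV_{\sigma DC}}\subseteq HSP([0,1])$, I would fix $A\in \mathbf{RMV_{\sigma DC}}$ and argue as follows. First, $A$ is semisimple (its MV-reduct is semisimple because $\sigma$-completeness forces the intersection of maximal ideals to be trivial), so we may identify $A$ with a subalgebra of $C(X)$ for $X=\mathrm{Max}(A)$. By \Cref{teo:LS}, the set $\mathcal{T}\subseteq [0,1]^X$ of functions essentially equal to members of $A$ is a Riesz tribe, and the map $\eta\colon \mathcal{T}\to A$ sending $f\mapsto f^*$ is a surjective $\sigma$-homomorphism of Riesz MV-algebras. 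Since $\mathcal{T}$ is closed under pointwise countable suprema, and these pointwise suprema are precisely the suprema in the full product algebra $[0,1]^X$, the inclusion $\mathcal{T}\hookrightarrow [0,1]^X$ is a $\sigma$-complete subalgebra embedding. Therefore $\mathcal{T}\in SP([0,1])$ and, since $A$ is a $\sigma$-homomorphic image of $\mathcal{T}$, we conclude $A\in HSP([0,1])$.

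The main subtlety to pin down is that the operators $H$, $S$, $P$ in the infinitary Birkhoff theorem \cite[Chapter III, Theorem 9.4]{Slomiski} match our notions: $P$ must permit arbitrary (not just finite) products, $S$ must mean $\sigma$-complete subalgebras, and $H$ must mean surjective $\sigma$-homomorphisms. Once this bookkeeping is in place, the argument reduces to checking that the Riesz tribe furnished by Loomis--Sikorski really does sit inside $[0,1]^X$ as a $\sigma$-complete subalgebra (since pointwise suprema are the algebraic suprema in $[0,1]^X$), which is immediate from the definition of Riesz tribe. No further computation is needed; the theorem then follows from the two inclusions.
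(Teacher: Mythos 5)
Your proof is correct and follows essentially the same route as the paper: the paper's own argument is precisely that, by the Loomis--Sikorski theorem (\Cref{teo:LS}), every algebra in $\mathbf{RMV_{\sigma DC}}$ is the $\sigma$-homomorphic image of a Riesz tribe, and every Riesz tribe lies in $SP([0,1])$ by definition. Your additional care about the easy inclusion and about matching the infinitary operators $H$, $S$, $P$ of \cite{Slomiski} is sound bookkeeping that the paper leaves implicit.
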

\begin{proof} By the Loomis-Sikorski theorem, any algebra from $\mathbf{RMV_{\sigma DC}}$ is the homomorphic image of a Riesz tribe and, by definition, any Riesz tribe belongs to $SP([0,1])$.
\end{proof}

The above  theorem actually  states the standard completeness of \NL. 

\begin{corollary}(Standard completeness)\label{stdco}
If $\varphi$ is a  formula of \NL , then  $\vdash_{IRL} \varphi$ if and only if $e(\varphi)=1$ for any evaluation $e:FORM_{IRL}\to [0,1]$.
\end{corollary}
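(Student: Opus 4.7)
The plan is to deduce this corollary essentially as a direct consequence of \Cref{pro:completeness} and \Cref{inf-hsp}. The forward direction is immediate: the standard real unit interval with its natural $\sigma$-complete Riesz MV-algebra structure is an object of $\mathbf{RMV_{\sigma DC}}$, and its countable suprema coincide with the usual real suprema, so every map $e:FORM_{IRL}\to[0,1]$ satisfying the four evaluation clauses is of exactly the type covered by \Cref{pro:completeness}. Hence $\vdash_{IRL}\varphi$ yields $e(\varphi)=1$ for each such $e$.

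For the converse, I would fix an arbitrary $R\in\mathbf{RMV_{\sigma DC}}$ and an evaluation $e_R:FORM_{IRL}\to R$ and show $e_R(\varphi)=1$; by \Cref{pro:completeness} this gives $\vdash_{IRL}\varphi$. By \Cref{inf-hsp}, $R$ lies in $HSP([0,1])$ in the sense of infinitary varieties, so the validity of the identity $\varphi \approx 1$ in $[0,1]$ must propagate to $R$. This propagation is Birkhoff's theorem for infinitary varieties (the very result \cite[Chapter III, Theorem 9.4]{Slomiski} already invoked in the proof of \Cref{inf-hsp}), applied to the identity asserting that $\varphi$ equals the top element.

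The three steps P, S, H need a brief verification. The P step is transparent since suprema in any product $[0,1]^I$ are pointwise, so an evaluation into $[0,1]^I$ is precisely a family of $[0,1]$-valued evaluations. The S step is vacuous. The only potentially delicate point, and hence the main obstacle, is the H step: given a surjective $\sigma$-homomorphism $\pi\colon A\twoheadrightarrow R$ in $\mathbf{RMV_{\sigma DC}}$ together with $e_R$, one has to lift. I would pick a $\pi$-preimage $a_v\in A$ for each $e_R([v])$ and invoke the freeness of $IRL$ stated in \Cref{pro:IRLfree} to extend this choice to a unique evaluation $e_A:FORM_{IRL}\to A$. Both $\pi\circ e_A$ and $e_R$ are then evaluations into $R$ agreeing on propositional variables, so the uniqueness clause of \Cref{pro:IRLfree} forces $\pi\circ e_A=e_R$ on every formula, whence $e_R(\varphi)=\pi(e_A(\varphi))=\pi(1)=1$, as required.
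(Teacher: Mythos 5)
Your proof is correct and follows the route the paper intends: the paper gives no explicit argument, simply declaring the corollary to be a restatement of \Cref{inf-hsp} in light of \Cref{pro:completeness}, and your verification that validity of $\varphi\approx 1$ in $[0,1]$ propagates through products, ($\sigma$-)subalgebras, and surjective $\sigma$-homomorphic images (using the Loomis--Sikorski decomposition $R\in HSP([0,1])$ and the lifting of evaluations along the quotient via uniqueness of evaluations on variables) is exactly the implicit content of that claim. No gaps.
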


In the final part of this section, we give a concrete description of the Lindenbaum-Tarski algebra $IRL_n$ as an algebra of $[0,1]$-valued functions. Recall that we  proved, up to isomorphism, the following inclusions
\begin{equation}\label{eq:incl}
RL_n \leq C([0,1]^n)\leq IRL_n
\end{equation}
Note that the second inclusion was proved in \Cref{contsub}.\\

We start by characterizing $IRL_n$ in terms of Riesz tribes.

It follows from \cite[Chapter III, \S\ 2]{Slomiski} that $Term_{RMV\sigma}$, the set of terms in the language of $\mathbf{RMV_{\sigma DC}}$, is the absolutely free algebra in the same language. Let us denote by $Term_{RMV\sigma}(n)$ the absolutely free algebra generated by a set of $n$ propositional variables, namely $\{v_1, \dots v_n\}$. If fix an algebra $A\in \mathbf{RMV_{\sigma DC}}$, to each $\tau \in Term_{RMV\sigma}(n)$ it corresponds a function $f_{\tau}^A:A^n\to A$, inductively defined starting from the assignment $v_i\mapsto \pi_i^A$, where $\pi_i^A:A^n\to A$ is the $i^{th}$ projection. Let us denote the algebra of term functions evaluated into $A$ by $\mathcal{RT}_n^{A}$. Since all operations are pointwise defined upon the operations of $A$, we deduce that $\mathcal{RT}_n^{A}\in \mathbf{RMV_{\sigma DC}}$. When $A=[0,1]$, we denote $\mathcal{RT}_n^{A}$  by $\mathcal{RT}_n$, $f_{\tau}^A$ by $f_{\tau}$ and $\pi_i^A$ by $\pi_i$ for any $i=1,\dots, n$. 

\begin{proposition}\label{pro:term-funct}
For any $n\in \mathbb{N}$, $\mathcal{RT}_n$ is the smallest Riesz tribe that contains the projection functions $\{ \pi_1, \dots, \pi_n\}$.
\end{proposition}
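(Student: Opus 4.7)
The plan is to establish two containments: first that $\mathcal{RT}_n$ is itself a Riesz tribe in $[0,1]^{[0,1]^n}$ containing every projection $\pi_i$, and second that any Riesz tribe $\mathcal{S}\subseteq [0,1]^{[0,1]^n}$ with $\{\pi_1,\dots,\pi_n\}\subseteq \mathcal{S}$ must already contain every $f_\tau$.

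For the first containment, $\pi_i = f_{v_i}\in \mathcal{RT}_n$ by the very definition of term evaluation. Closure under the Riesz MV-operations is automatic: applying such an operation to term functions $f_{\tau_1},\dots,f_{\tau_k}$ produces the term function of the syntactically combined term, and this term function, by the inductive definition of evaluation, acts pointwise. The only point requiring comment is closure under countable pointwise suprema: given $\{f_{\tau_k}\}_{k\in \mathbb{N}}\subseteq \mathcal{RT}_n$, the formal expression $\bigvee_{k}\tau_k$ lies in $Term_{RMV\sigma}(n)$ because $\bigvee$ belongs to the signature of $\mathbf{RMV_{\sigma DC}}$; evaluating it in the $\sigma$-complete Riesz MV-algebra $[0,1]$ yields precisely the map $x\mapsto \bigvee_k f_{\tau_k}(x)$, which belongs to $[0,1]^{[0,1]^n}$ since $[0,1]$ is $\sigma$-complete as a Riesz MV-algebra. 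Hence $\mathcal{RT}_n$ is a Riesz tribe.

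For minimality, I would argue by induction on the rank of $\tau$ in the absolutely free algebra $Term_{RMV\sigma}(n)$, which is well-founded in the sense of \cite[Chapter III]{Slomiski}. The base case $\tau = v_i$ gives $f_\tau = \pi_i \in \mathcal{S}$ by hypothesis. If $\tau$ is constructed from previously defined terms $\tau_1,\dots,\tau_k$ by one of the finitary Riesz MV-operations or a scalar $\alpha \in [0,1]$, then by the inductive hypothesis $f_{\tau_1},\dots,f_{\tau_k}\in \mathcal{S}$, and applying the same operation pointwise inside the Riesz MV-algebra $\mathcal{S}$ produces $f_\tau$. If $\tau = \bigvee_k \tau_k$, then each $f_{\tau_k}\in \mathcal{S}$ by induction, and the pointwise countable supremum closure of the tribe $\mathcal{S}$ gives $f_\tau \in \mathcal{S}$.

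The only genuinely delicate point, and the main thing to be careful about, is the compatibility between syntactic term-formation and pointwise function operations: I must invoke that evaluation is a homomorphism from $Term_{RMV\sigma}(n)$ into the algebra $[0,1]^{[0,1]^n}$ equipped with pointwise operations, so that the recursive construction of $f_\tau$ agrees at each step with the operation performed inside $\mathcal{S}$. Once this is spelled out using the standard recursive definition of term evaluation in an infinitary algebra, the induction closes without further obstacle.
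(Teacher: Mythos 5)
Your proposal is correct and follows essentially the same route as the paper: the paper likewise observes that $\mathcal{RT}_n$ is a Riesz tribe because it is a subalgebra of $[0,1]^{[0,1]^n}$ with all operations (including the countable supremum) defined pointwise, and obtains minimality from the fact that the projections generate $\mathcal{RT}_n$ via the inductive definition of term functions. You merely spell out the induction on term rank and the evaluation-homomorphism compatibility that the paper dismisses as straightforward, which is a fair expansion rather than a different argument.
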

\begin{proof}
The algebra $\mathcal{RT}_n$ is a Riesz tribe: it is a subalgebra of $[0,1]^{[0,1]^n}$ and the operations defined pointwise. The fact that $\{ \pi_1, \dots, \pi_n\}$ is a set of generators for $\mathcal{RT}_n$ is straightforward by the inductive definition of a term-function.
\end{proof}

\begin{theorem}\label{teo:IRL=RT}
The algebra $\mathcal{RT}_n$ is isomorphic to the Lindenbaum-Tarski algebra $IRL_n$ of the logic \NL .
\end{theorem}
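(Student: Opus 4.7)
The plan is to produce an explicit isomorphism $\Phi\colon IRL_n\to \mathcal{RT}_n$ sending each equivalence class $[\varphi]$ to its standard term function $f_\varphi$, where $f_\varphi(x_1,\ldots,x_n)=e_{\mathbf{x}}(\varphi)$ for the unique $[0,1]$-evaluation $e_{\mathbf{x}}$ with $e_{\mathbf{x}}(v_i)=x_i$. I will exploit both \Cref{pro:IRLfree} (universal property of $IRL_n$) and \Cref{stdco} (standard completeness).

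First I would use \Cref{pro:IRLfree}: since $\mathcal{RT}_n\in\mathbf{RMV_{\sigma DC}}$ (it is a Riesz tribe by \Cref{pro:term-funct}), the map sending $[v_i]\in IRL_n$ to $\pi_i\in\mathcal{RT}_n$ extends uniquely to a $\sigma$-homomorphism $\Phi\colon IRL_n\to\mathcal{RT}_n$ of Riesz MV-algebras. Because $\Phi$ preserves all operations (including countable joins) and agrees on generators, an easy induction on the construction of a term shows $\Phi([\tau])=f_\tau$ for every term $\tau$ in $n$ variables, so $\Phi$ has the intended action on classes of formulas.

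Next I would verify surjectivity: the image of $\Phi$ is a sub-$\sigma$-complete Riesz MV-algebra of $\mathcal{RT}_n$ that contains each projection $\pi_i=\Phi([v_i])$. By \Cref{pro:term-funct}, $\mathcal{RT}_n$ is the smallest Riesz tribe containing $\{\pi_1,\ldots,\pi_n\}$, whence $\Phi$ is onto. For injectivity I would invoke standard completeness: if $\Phi([\varphi])=\Phi([\psi])$, then $e_{\mathbf{x}}(\varphi)=e_{\mathbf{x}}(\psi)$ for every $\mathbf{x}\in[0,1]^n$, hence $e(\varphi\leftrightarrow\psi)=1$ for every evaluation $e\colon FORM_{IRL}\to[0,1]$ (such an evaluation is entirely determined by its values on the $n$ free variables, by the uniqueness part of the extension argument in \Cref{pro:IRLfree} applied to $[0,1]$). \Cref{stdco} then yields $\vdash\varphi\leftrightarrow\psi$, i.e.\ $[\varphi]=[\psi]$.

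The only nontrivial step is injectivity, and the essential obstacle is already dispatched by \Cref{stdco} (whose proof rests on the Loomis--Sikorski theorem, \Cref{teo:LS}, and the infinitary-variety description \Cref{inf-hsp}). Everything else reduces to bookkeeping: checking that evaluations into $[0,1]$ correspond bijectively to points of $[0,1]^n$ when restricted to $n$-variable formulas, and that the $\sigma$-homomorphism $\Phi$ really does send $[\tau]$ to $f_\tau$, which follows by structural induction using the fact that operations of $\mathcal{RT}_n$ are defined pointwise from those of $[0,1]$ and that countable joins in $\mathcal{RT}_n$ are pointwise suprema.
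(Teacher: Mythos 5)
Your proof is correct, and it takes a genuinely different route from the paper's. The paper proves directly that $\mathcal{RT}_n$ satisfies the universal property of the free $n$-generated algebra in $\mathbf{RMV_{\sigma DC}}$: given any $\eta\colon\{\pi_1,\dots,\pi_n\}\to A$, it uses \Cref{inf-hsp} to transfer identities valid in $[0,1]$ to the arbitrary algebra $A$ (this is what makes the extension $f_\tau\mapsto f_\tau^A(a_1,\dots,a_n)$ well defined), and then concludes by the uniqueness of free algebras in infinitary varieties, since $IRL_n$ is free by \Cref{pro:IRLfree}. You instead exploit only the freeness of $IRL_n$ to manufacture the canonical comparison map $\Phi([\varphi])=f_\varphi$, get surjectivity from \Cref{pro:term-funct} (or, even more directly, from the fact that every element of $\mathcal{RT}_n$ is a term function), and settle injectivity by standard completeness \Cref{stdco}; since \Cref{stdco} and \Cref{pro:IRLfree} both precede the theorem, there is no circularity, and the small bookkeeping steps you flag (evaluations into $[0,1]$ of $n$-variable formulas depend only on the values of $v_1,\dots,v_n$, including through the countable disjunction; joins in $\mathcal{RT}_n$ are pointwise) are routine inductions. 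The trade-off: the paper's argument yields as a by-product the universal property of $\mathcal{RT}_n$ over the projections, i.e., freeness with respect to arbitrary targets $A$, which is slightly more information than the bare isomorphism; your argument is more explicit and economical, needing only completeness with respect to $[0,1]$ rather than the transfer of identities to arbitrary algebras, though both ultimately rest on \Cref{inf-hsp} (directly in the paper, via \Cref{stdco} in your version).
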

\begin{proof}
By \cite[Chapter II, Theorem 7.1]{Slomiski} it is enough to prove that $\mathcal{RT}_n$ is freely generated in $\mathbf{RMV_{\sigma DC}}$ by $\{\pi_1, \dots, \pi_n\}$.

Let $A$ be any $\sigma$-complete Riesz MV-algebra and let $\eta: \{\pi_1, \dots, \pi_n\}\to A$ be any function, where $\pi_i:[0,1]^n\to [0,1]$. For any $\tau \in Term_{RMV\sigma}(n)$, we can define $f_{\tau}^A:A^n\to A$, an element in $\mathcal{RT}_n^{A}$. Let $a_i=\eta(\pi_i)$, for any $i=1, \dots ,n$ and let $F_\eta:Term_{RMV\sigma}(n)\to A$ the map defined by $F_\eta(\tau)=f_{\tau}^A(a_1, \dots ,a_n)$. By \Cref{inf-hsp}, if two terms are equals when evaluated in $[0,1]$, then they are also equal when evaluated in $A$. Thus, since $F_\eta(v_i)=\pi_i^A(a_1, \dots ,a_n)=a_i=\eta(\pi_i)$, we deduce that $F_\eta$ induces a well defined extension $\overline{\eta}$ of $\eta$ defined by $\overline{\eta}(f_{\tau})=F_\eta(\tau)$. It is easily seen that $\overline{\eta}$ is an homomorphism of $\sigma$-complete Riesz MV-algebras. Finally, the uniqueness of $\overline{\eta}$ follows from \Cref{pro:term-funct}.
\end{proof}

Let $RL_n=\{f:[0,1]^n\to [0,1]\mid \, f \mbox{ continuous piecewise linear function}\}$, as defined in \Cref{sec:prelimRMV}. By \cite[Theorem 10]{LeuRMV},  the piecewise continuous functions are exactly the functions corresponding to the formulas of $\mathcal{R}$\L, so $RL_n\subseteq \mathcal{RT}_n$.

\begin{corollary}\label{corgencont}
$\mathcal{RT}_n$ is generated by $C([0,1]^n)$.
\end{corollary}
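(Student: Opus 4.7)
The plan is to show both inclusions: first that $C([0,1]^n)$ is contained in $\mathcal{RT}_n$, and then that any Riesz tribe containing $C([0,1]^n)$ must contain $\mathcal{RT}_n$.

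For the first inclusion, I would invoke \Cref{teo:IRL=RT} together with the chain of inclusions \eqref{eq:incl}: since $\mathcal{RT}_n \cong IRL_n$ and $C([0,1]^n) \leq IRL_n$, we get $C([0,1]^n) \subseteq \mathcal{RT}_n$ up to the identification of these function algebras. Alternatively, and more intrinsically, one can argue directly: $\mathcal{RT}_n$ contains $RL_n$ (as term functions of $\mathcal{R}\L$ are a fortiori term functions of \NL), and by \Cref{pro:dcAREnc} the $\sigma$-complete Riesz MV-algebra $\mathcal{RT}_n$ is norm-complete; hence it contains the norm-completion of $RL_n$, which by \cite[Theorem 3.2]{DiNLL-RMV} is $C([0,1]^n)$.

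For the second inclusion, I would use \Cref{pro:term-funct}: $\mathcal{RT}_n$ is the \emph{smallest} Riesz tribe over $[0,1]^n$ containing the projections $\pi_1,\ldots,\pi_n$. Since each projection $\pi_i\colon [0,1]^n\to[0,1]$ is continuous, we have $\{\pi_1,\ldots,\pi_n\}\subseteq C([0,1]^n)$. Consequently, any Riesz tribe $\mathcal{S}$ with $C([0,1]^n)\subseteq \mathcal{S}$ contains the projections, and therefore by minimality $\mathcal{RT}_n\subseteq \mathcal{S}$.

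Putting the two inclusions together, $\mathcal{RT}_n$ is itself a Riesz tribe containing $C([0,1]^n)$, and it is contained in every such Riesz tribe; that is exactly the statement that $\mathcal{RT}_n$ is generated by $C([0,1]^n)$. The proof is essentially bookkeeping once \Cref{pro:term-funct} and the chain \eqref{eq:incl} are available, so I do not anticipate a real obstacle; the only subtle point is to be explicit about the ambient Riesz tribe (namely $[0,1]^{[0,1]^n}$ with pointwise operations) in which ``generated by'' is being computed, in order to make the minimality argument rigorous.
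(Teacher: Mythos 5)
Your proof is correct and follows essentially the same route as the paper: the containment $C([0,1]^n)\subseteq\mathcal{RT}_n$ via $RL_n\subseteq\mathcal{RT}_n$ plus norm-completeness from \Cref{pro:dcAREnc}, and minimality via the continuity of the projections together with \Cref{pro:term-funct}. The extra care you take about the ambient tribe $[0,1]^{[0,1]^n}$ is a reasonable clarification but not a departure from the paper's argument.
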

\begin{proof}
We have $RL_n\subseteq \mathcal{RT}_n$ and, by Propositions \ref{pro:dcAREnc}, $\mathcal{RT}_n$ is norm-complete with respect to the unit-norm, so   $C([0,1]^n)\subseteq \mathcal{RT}_n$. If $\mathcal T$ is another Riesz tribe  such that $C([0,1]^n)\subseteq {\mathcal T}$, then $\{\pi_1,\ldots, \pi_n\}\subseteq {\mathcal T}$, so $\mathcal{RT}_n\subseteq {\mathcal T}$. 
\end{proof}

To give a more concrete representation of $\mathcal{RT}_n$, let us start with two important definitions. 
\begin{definition}
Assume that  $X$ and $Y$  are metric spaces.
\begin{enumerate}[label=(\roman*)]
\item \label{def:Borel} Let Borel$(X,Y)$ be the set of the functions $f:X\to Y$ that are Borel-measureable. That is, for any open set $O$ of $Y$, $f^{-1}(O)$ is a Borel set of $X$, where  the $\sigma$- algebra of Borel sets of $X$ is the smallest $\sigma$-algebra containing the open sets. In what follows we will denote $\text{Borel}(n)=\text{Borel}([0,1]^n, [0,1])$ and we assume that it is endowed with the obvious structure of Riesz MV-algebra.
\item \label{def:Baire} Let  Baire$(X,Y)$ be the set of all Baire functions $f:X\to Y$, i.e. the smallest set that contains the continuous functions and it is closed under pointwise limits of convergent sequences of  functions belonging to it. We set $\text{Baire}(n)=\text{Baire}([0,1]^n, [0,1])$ and we assume that it is endowed with the obvious structure of Riesz MV-algebra.
\end{enumerate}
\end{definition}

\begin{proposition} \label{rem:Bo=Ba}  $IRL_n\simeq \mathcal{RT}_n\simeq \emph{Borel}(n)\simeq \emph{Baire}(n)$.
\end{proposition}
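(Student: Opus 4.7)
The plan is to use \Cref{teo:IRL=RT} for the first isomorphism $IRL_n\simeq\mathcal{RT}_n$, and then prove the two equalities $\mathcal{RT}_n=\text{Baire}(n)$ and $\text{Baire}(n)=\text{Borel}(n)$ as subalgebras of $[0,1]^{[0,1]^n}$. Both equalities are handled by double inclusion, the second being the classical Lebesgue--Hausdorff theorem, which is where the substantive work sits.

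For $\mathcal{RT}_n=\text{Baire}(n)$, I would first check that $\text{Baire}(n)$ is a Riesz tribe containing the projections, and invoke minimality from \Cref{pro:term-funct} to get $\mathcal{RT}_n\subseteq\text{Baire}(n)$: the projections are continuous hence in the zeroth Baire class; the Riesz MV-operations on $[0,1]$ are continuous, so applied pointwise they preserve each Baire class and hence $\text{Baire}(n)$ itself; and a countable pointwise supremum $\bigvee_n f_n$ is the pointwise limit of the increasing ladder $g_k=\max(f_1,\ldots,f_k)$ of Baire functions, so it is Baire by definition. For the reverse inclusion, I would show that $\mathcal{RT}_n$ contains all continuous functions (\Cref{corgencont}) and is closed under pointwise limits of convergent sequences: if $f_k\to f$ pointwise, then
\[
f=\liminf_k f_k=\bigvee_{m\in\mathbb{N}}\bigwedge_{k\ge m} f_k,
\]
and in the Riesz tribe $\mathcal{RT}_n$ the countable inner infima and outer supremum coincide with their pointwise counterparts, so $f\in\mathcal{RT}_n$. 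Hence $\mathcal{RT}_n$ satisfies the defining closure properties of $\text{Baire}(n)$, forcing $\text{Baire}(n)\subseteq\mathcal{RT}_n$ by minimality.

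The remaining step $\text{Baire}(n)=\text{Borel}(n)$ is the one I expect to be the main obstacle, because it is not a formal closure argument but rather the classical Lebesgue--Hausdorff theorem applied to the metric space $[0,1]^n$. The inclusion $\text{Baire}(n)\subseteq\text{Borel}(n)$ is routine: continuous functions are Borel-measurable, and a pointwise limit of Borel-measurable functions into $[0,1]$ is Borel-measurable (preimages of open sets reassemble as countable unions and intersections of Borel sets). The hard direction $\text{Borel}(n)\subseteq\text{Baire}(n)$ requires a transfinite induction along the Borel hierarchy of $[0,1]^n$: on a metric space the characteristic function of a closed set $F$ is the pointwise limit of the continuous approximations $\varphi_k(x)=\max(0,1-k\,d(x,F))$, so characteristic functions of closed sets, and then of $F_\sigma$ and $G_\delta$ sets, lie in $\text{Baire}(n)$; iterating through the Borel classes (and using that bounded Borel-measurable functions are uniform limits of Borel simple functions) places every element of $\text{Borel}(n)$ in $\text{Baire}(n)$.

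Chaining the three identifications yields the claim. The only non-formal ingredient is the Lebesgue--Hausdorff step, which I would cite rather than reprove, so the proposition ultimately reduces to the routine verifications of the Riesz tribe structure on $\text{Baire}(n)$ and the closure of $\mathcal{RT}_n$ under pointwise sequential limits.
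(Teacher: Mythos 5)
Your chain of identifications is correct and follows the same skeleton as the paper: both use \Cref{teo:IRL=RT} for $IRL_n\simeq\mathcal{RT}_n$ and the Lebesgue--Hausdorff theorem \cite[p.393]{Kuratowski} for $\mathrm{Borel}(n)\simeq\mathrm{Baire}(n)$. Where you genuinely diverge is the middle step. The paper obtains $\mathcal{RT}_n\simeq\mathrm{Baire}(n)$ abstractly, by combining \Cref{corgencont} with \cite[Proposition 3.3]{Dvu-LS} and the remark that tribes are semisimple, so an MV-isomorphism between them is automatically a Riesz isomorphism; you instead prove the literal equality $\mathcal{RT}_n=\mathrm{Baire}(n)$ inside $[0,1]^{[0,1]^n}$ by a two-sided minimality argument: $\mathrm{Baire}(n)$ is a Riesz tribe containing the projections (so $\mathcal{RT}_n\subseteq\mathrm{Baire}(n)$ by \Cref{pro:term-funct}), and $\mathcal{RT}_n$ contains $C([0,1]^n)$ by \Cref{corgencont} and is closed under pointwise sequential limits via $f=\bigvee_m\bigwedge_{k\ge m}f_k$, since a tribe is closed under pointwise countable suprema and, via negation, infima (so $\mathrm{Baire}(n)\subseteq\mathcal{RT}_n$ by minimality of the Baire class). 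This buys a self-contained, concrete identification that bypasses the external citation and the semisimplicity detour, at the cost of the (standard, and only asserted in your sketch) verification that each Baire class is stable under composition with the continuous Riesz MV-operations; the paper's route is shorter on the page but leans on \cite{Dvu-LS}. Note also that your argument is distinct from the paper's own later direct route (\Cref{teo:tribe=Bo}), which instead establishes $\mathcal{RT}_n=\mathrm{Borel}(n)$ by approximating Borel functions with simple functions built from characteristic functions of intervals.
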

\begin{proof}
The isomorphism $\text{Borel}(n)\simeq \text{Baire}(n)$ is a consequence of  the Lebesgue-Hausdorff theorem \cite[p.393]{Kuratowski}. Since tribes are semisimple MV-algebras, an isomorphism between tribes is actually a Riesz isomorphism, so  $\mathcal{RT}_n\simeq \text{Baire}(n)$ follows by Corollary \ref {corgencont} and \cite[Proposition 3.3]{Dvu-LS}.  Finally, the isomorophism $IRL_n\simeq \mathcal{RT}_n$ is \Cref{teo:IRL=RT}.
\end{proof}

In the final part of this section we show how the isomorphism between $\mathcal{RT}_n$ and $\text{Borel}(n)$ (and therefore \Cref{rem:Bo=Ba}) can be proved in a direct manner. We believe that a direct proof can be useful to understand exactly how the Borel-measurable functions are obtained starting from formulas in $\mathcal{R}\L$. At the same time we provide a concrete example of a sequence of functions in $RL_n$ whose pointwise supremum is not even a continuous function, namely the sequence  $\{ f_{m,r}\}_{m\in \mathbb{N}}$ used in the proof of \Cref{teo:tribe=Bo}. To do so, let us denote by $\chi_E \colon [0,1]^n\to [0,1]$ the characteristic function of the subset $E\subseteq [0,1]^n$. A \emph{simple} function is a linear combination, with real scalars, of characteristic functions.

It is well known that Borel functions can be written as limits of simple functions built upon the characteristic functions of Borel subsets of $[0,1]^n$. More precisely, one can prove the following.

\begin{lemma} \label{lem:Borel=supSimple}
Any Borel function $f:[0,1]^n\to [0,1]$ is the uniform limit of an increasing sequence of simple functions $f_m:[0,1]^n\to [0,1]$, where $f_m=\sum_{i=1}^{k_m} \alpha_i\chi_{E_i}$ with $\alpha_i\in [0,1]$, $k_m$ a suitable index that depends on $m$ and $E_i$ are Borel subsets of $[0,1]^n$.
\end{lemma}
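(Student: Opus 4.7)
The plan is to use the standard dyadic approximation from below. Fix $m\in\mathbb{N}$ and partition $[0,1]$ into $2^m$ half-open intervals $I_i^m=\bigl[\tfrac{i}{2^m},\tfrac{i+1}{2^m}\bigr)$ for $i=0,\dots,2^m-2$, together with $I_{2^m-1}^m=\bigl[\tfrac{2^m-1}{2^m},1\bigr]$. Since $f$ is Borel-measurable, the preimages $E_i^m=f^{-1}(I_i^m)$ are Borel subsets of $[0,1]^n$, and they form a (finite) partition of $[0,1]^n$. I would define
\[
f_m=\sum_{i=0}^{2^m-1}\frac{i}{2^m}\,\chi_{E_i^m},
\]
so each $f_m\colon[0,1]^n\to[0,1]$ is simple with coefficients in $[0,1]\cap\mathbb Q\subseteq[0,1]$ and index $k_m=2^m$.

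The uniform convergence follows immediately from the construction: for every $x\in[0,1]^n$ there is a unique $i$ with $f(x)\in I_i^m$, hence $f_m(x)=\tfrac{i}{2^m}\le f(x)<\tfrac{i+1}{2^m}$, giving $0\le f(x)-f_m(x)<\tfrac{1}{2^m}$. Thus $\|f-f_m\|_\infty\le 2^{-m}\to 0$.

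The only point that needs a small verification is monotonicity $f_m\le f_{m+1}$. The partition at step $m+1$ refines that at step $m$: each $I_i^m$ splits as $I_{2i}^{m+1}\cup I_{2i+1}^{m+1}$, so $E_i^m=E_{2i}^{m+1}\cup E_{2i+1}^{m+1}$. On $E_{2i}^{m+1}$ we have $f_m=\tfrac{i}{2^m}=\tfrac{2i}{2^{m+1}}=f_{m+1}$, while on $E_{2i+1}^{m+1}$ we have $f_m=\tfrac{2i}{2^{m+1}}<\tfrac{2i+1}{2^{m+1}}=f_{m+1}$. Hence $f_m\le f_{m+1}$ pointwise.

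I do not anticipate a genuine obstacle here; this is textbook measure theory and is the standard ``lower dyadic approximation'' used, for instance, in the proof that every non-negative measurable function is an increasing limit of simple functions. The subtlety worth flagging is only the choice of half-open intervals (with the rightmost one closed), which is needed to ensure the $E_i^m$ cover $[0,1]^n$ and to make the refinement argument work cleanly; using closed intervals naively would break the disjointness and the monotonicity check.
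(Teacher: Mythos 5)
Your proof is correct and is essentially the same lower dyadic approximation the paper uses, with the added (and welcome) care of closing the rightmost interval so that the level sets actually partition $[0,1]^n$, and an explicit verification of monotonicity via refinement that the paper leaves as ``increasing by construction.''
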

\begin{proof}
The result is straightforward by direct inspection of the standard argument used in the case of positive-valued Borel functions. We will sketch such an argument for completeness.   For any $m\ge 1$, we divide the interval $[0,1]$ in $2^m$ subintervals of the type $I_{m_k}=\left[ \frac{k}{2^m}, \frac{k+1}{2^m} \right)$, with $k=0,\dots, 2^m-1$. For any $m$ and $k$, $E_{m_k}=f^{-1}(I_{m_k})$ is a Borel subset of $[0,1]^n$. We then consider the following simple functions:
\begin{center}
$f_m=\sum_{k=0}^{2^m-1}\frac{k}{2^m}\chi_{E_{m_k}}$.
\end{center}
For any choice of indexes, $\frac{k}{2^m}\in [0,1]$ and the sequence is increasing by construction. Moreover, $f_m(x)=\frac{k}{2^m}$ if, and only if, $x\in E_{m_k}$ if, and only if, $\frac{k}{2^m}\le f(x)\le \frac{k+1}{2^m}$. Thus, for any $x\in [0,1]^n$, $\mid f_m(x)-f(x)\mid \le \frac{1}{2^m}$ for any $m$ and the claim is settled.
\end{proof}

\begin{theorem}\label{teo:tribe=Bo}
An $n$-ary function $f:[0,1]^n\to [0,1]$ belongs to the Riesz tribe generated by the projection functions if, and only if, it is Borel measurable.
\end{theorem}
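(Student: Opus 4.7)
The plan is to prove the two inclusions $\mathcal{RT}_n \subseteq \text{Borel}(n)$ and $\text{Borel}(n) \subseteq \mathcal{RT}_n$ separately, the first being essentially automatic and the second carrying the real content.

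First, I would observe that $\text{Borel}(n)$ is itself a Riesz tribe on $[0,1]^n$: the MV-operations $\oplus$ and $^*$, the scalar multiplications $\alpha\cdot(-)$, and pointwise countable suprema of $[0,1]$-valued functions all preserve Borel measurability. Since every projection $\pi_i$ is continuous, hence Borel-measurable, the minimality of $\mathcal{RT}_n$ asserted in \Cref{pro:term-funct} gives $\mathcal{RT}_n\subseteq \text{Borel}(n)$ at once.

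For the converse, my strategy would be first to place the characteristic function of every Borel subset of $[0,1]^n$ inside $\mathcal{RT}_n$, and then to lift this to arbitrary Borel functions via \Cref{lem:Borel=supSimple}. The collection $\Sigma := \{E\subseteq [0,1]^n : \chi_E \in \mathcal{RT}_n\}$ is a $\sigma$-algebra, since $\chi_{E^c}=\chi_E^*\in \mathcal{RT}_n$ and $\chi_{\bigcup_m E_m}=\sup_m \chi_{E_m}\in \mathcal{RT}_n$. To check that $\Sigma$ contains a generating family for the Borel $\sigma$-algebra, I would fix $i\in\{1,\ldots,n\}$ and $r\in[0,1]$ and exhibit the increasing sequence
\[
f_{m,r} := \underbrace{(r\ominus \pi_i)\oplus\cdots\oplus(r\ominus \pi_i)}_{m\text{ terms}}, \qquad m\geq 1,
\]
(where $r$ abbreviates the constant function $r\cdot 1$ and $\ominus$ is truncated subtraction). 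This sequence lies in $\mathcal{RT}_n$, and a pointwise computation gives $\sup_m f_{m,r}(x)=1$ if $\pi_i(x)<r$ and $0$ otherwise, so $\chi_{\{\pi_i<r\}}\in \mathcal{RT}_n$. Because the sub-basic half-spaces $\{\pi_i<r\}$ generate the Borel $\sigma$-algebra of $[0,1]^n$, $\Sigma$ then contains all Borel sets.

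Once characteristic functions are handled, simple functions $\sum_{i\leq k}\alpha_i\chi_{E_i}$ with disjoint Borel sets $E_i$ and coefficients $\alpha_i\in[0,1]$ lie in $\mathcal{RT}_n$, since they can be assembled from scalar multiples of characteristic functions via $\oplus$ (disjointness prevents truncation). For an arbitrary Borel $f\colon [0,1]^n\to [0,1]$, \Cref{lem:Borel=supSimple} produces an increasing sequence of such simple functions converging uniformly, hence monotonically pointwise, to $f$; thus $f=\sup_m f_m\in \mathcal{RT}_n$. The main obstacle I anticipate is the explicit identification $\sup_m f_{m,r}=\chi_{\{\pi_i<r\}}$: this is the step that genuinely exploits the infinitary supremum of the tribe to leave the continuous world and extract a discontinuous characteristic function from continuous generators. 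Once this concrete internal witness is in place, the remainder reduces to the standard measure-theoretic schema of verifying a $\sigma$-algebra on generators and approximating Borel functions by simple ones.
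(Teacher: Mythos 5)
Your proof is correct, and it follows the same overall decomposition as the paper (reduce to characteristic functions of a generating family, then to simple functions via \Cref{lem:Borel=supSimple}, then pass to all Borel functions), but the implementation differs in three points worth noting. First, you make the set-theoretic bookkeeping explicit by verifying that $\Sigma=\{E:\chi_E\in\mathcal{RT}_n\}$ is a $\sigma$-algebra via $\chi_{E^c}=\chi_E^*$ and $\chi_{\bigcup_m E_m}=\sup_m\chi_{E_m}$; the paper instead passes from the generating intervals to all Borel sets with only the phrase ``generated by,'' so your version actually spells out a step the paper leaves implicit. Second, your internal witnesses are different: you obtain $\chi_{\{\pi_i<r\}}$ as the pointwise supremum of the MV-sums $m\,(r\ominus\pi_i)$, whereas the paper obtains $\chi_{(r,1]}$ as the infimum of a decreasing sequence of piecewise-linear ramps $f_{m,r}$ and then takes finite meets for boxes; both exploit the countable lattice operations applied to (piecewise-linear) elements of $RL_n$, and both sets of generators do generate the Borel $\sigma$-algebra, so this is a matter of taste. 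Third, and most substantively, in the final approximation step you use only the defining closure of a tribe under pointwise countable suprema (the lemma's sequence is increasing, so its uniform limit is its pointwise supremum), whereas the paper invokes norm-completeness of $\mathcal{RT}_n$ through \Cref{pro:dcAREnc} to absorb uniform limits; your route is therefore slightly more elementary and self-contained on this point. One cosmetic remark: \Cref{lem:Borel=supSimple} as stated does not say the sets $E_i$ are pairwise disjoint, which your assembly of simple functions via $\oplus$ uses, but disjointness is immediate from the construction in its proof (the $E_{m_k}$ are preimages of disjoint dyadic intervals), or can be arranged by passing to level sets, so this is not a gap.
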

\begin{proof}
Since projection functions are Borel-measurable and $\text{Borel}(n)$ is closed under pointwise suprema, we easily deduce that $\mathcal{RT}_n\subseteq \text{Borel}(n)$.

To prove the other inclusion, first notice that $\mathcal{RT}_n$ is a  $\sigma$-complete Riesz MV-algebra and therefore it is norm complete, that is, any uniform limit of elements of $\mathcal{RT}_n$ belongs to $\mathcal{RT}_n$. Thus, by \Cref{lem:Borel=supSimple}, we only need to prove that the characteristic functions of Borel subsets of $[0,1]^n$ are elements of $\mathcal{RT}_n$.

For $n=1$, it is well known that the Borel subsets of $[0,1]$ are generated by intervals of the type $(r,1]$, with $r>0$. Thus, it is enough to prove the result for functions of the type $\chi_{(r,1]}$. In this case, we write $\chi_{(r,1]}=\bigwedge _m f_{m,r}$, where $f_{m,r}$ is the continuous piecewise linear function with real coefficients defined by
$$
f_{m,r}(x)=
\begin{cases}
0& \text{if } x\le r-\frac{r}{2^m}\\
\text{linear}& \text{if } r-\frac{r}{2^m}<x\le r\\
1& \text{if } x> r
\end{cases}
$$

We recall that \cite[Corollary 7]{LeuRMV} entails that piecewise linear functions with real coefficients belong to the Riesz MV-algebra generated by the projection function.

To extend the argument to the $n$-valued case, it is enough to remark that the Borel subsets of $[0,1]^n$ are generated by multi-intervals of the type $E= \Pi_{i=1}^nE_i$, with $E_i=(r_i,s_i)$ and that $\chi_E=\bigwedge_i \chi_{E_i}$, which we proved to be elements of the Riesz tribe generated by the projection functions.
\end{proof}

Note that the proof of \Cref{teo:tribe=Bo} is a significant simplification of \cite[Theorem 3.3]{DiNN}, where the authors characterize the smallest MV-tribe that contains all projection functions.

\begin{remark}
A different proof of the isomorphism between $IRL_n$ and $\text{Baire}(n)$ can we obtained by \cite[Lemma 1.1]{Coq} and  \Cref{Gamma-DedCompl}.  
\end{remark}

\subsection{Conclusion: logic and analysis in literature}
We conclude this paper by recalling other approaches to the connections between logic and analysis in literature. We start by recalling that some basic connections between the logic of Riesz MV-algebras and functional analysis were made in \cite{DiNLL-RMV}.

One of the first reference to mention is \cite{BenYacoov}, where it is studied a logical system that has metric spaces as models. Such a logic is called \emph{continuous first-order logic} (CFO). It is a predicate logic, whose propositional fragment is axiomatized by the connectives of \L ukasiewicz logic plus an operator that multiplies by $\frac{1}{2}$. The resemblance of such fragment with the logic of Riesz MV-algebras is very clear and it is worth to explore the relations between CFO and the infinitary logic we propose in Section \ref{sec:logic}. Related to this subject, it is worth mention \cite{CI}, where the authors prove that for continuous metric structures, CFO has the same expressive power than Rational Pavelka logic, which is a conservative extension of \L ukasiewicz logic. See also \cite[Historical Remarks]{Iovino}. Moreover, in \cite{BY-I} the authors deal with infinitary continuous logic.

In a recent preprint \cite{abbadini}, M. Abbadini provides a finite axiomatization for several infinitary varieties of lattice-ordered structures. In particular, a finite axiomatization is obtained for Dedekind $\sigma$-complete Riesz Spaces with a weak unit.

Finally, we mention once again \cite{EnzoReggio}, where the authors define a class of MV-algebras enriched with an infinitary operator, for which they provide a finite axiomatization. Such a class, as a category of infinitary algebras, is dual to the category of compact Hausdorff spaces.

\subsection*{Acknowledgment}
\noindent I. Leu\c stean was supported by a grant of the Romanian National Authority for Scientific Research and Innovation, CNCS-UEFISCDI, project number PN-II-RU-TE-2014-4-0730. The authors are deeply grateful to Vincenzo Marra, whose comments and suggestions helped us prove Theorem \ref{inf-hsp}, and to the anonymous referees whose suggestions improved the quality of our presentation.

\end{document}